\def\th@definition{
	\thm@notefont{}	
	\normalfont  
}
\newtheorem{theorem}{Theorem}[section]
\newtheorem{corollary}[theorem]{Corollary}
\newtheorem{lemma}[theorem]{Lemma}
\newtheorem{proposition}[theorem]{Proposition}
\newtheorem{definition}[theorem]{Definition}
\newtheorem{remark}[theorem]{Remark}
\newcommand{\NS}{\operatorname{NS}}
\newcommand{\Sym}{\operatorname{Sym}}
\newcommand{\N}{\operatorname{N}}
\renewcommand{\H}{\operatorname{H}}
\newcommand{\T}{\operatorname{T}}
\newcommand{\Aut}{\operatorname{Aut}}
\newcommand{\NE}{\operatorname{NE}}
\newcommand{\Amp}{\operatorname{Amp}}
\def \P{{\mathbb P}}
\def \Z{{\mathbb Z}}
\def \Q{{\mathbb Q}}
\def \R{{\mathbb R}}
\begin{document}

\title{An example of potential density on $Hilb^3$ of a K3 surface}
\author{E. Amerik, M. Lozhkin}
\maketitle

\section{Introduction}

An irreducible projective variety $X$ defined over a number field $K$ is said to have potentially dense rational points, or sometimes simply to be ``potentially dense (PD)'', if there is a finite extension $L$ of $K$ such that the set of $L$-points $X(L)$ is Zariski-dense in $X$. For example, $\P^n_K$ is potentially dense for any $K$; it follows that the unirational varieties are potentially dense. 

It is also not difficult to prove that the abelian varieties are potentially dense (see for example Prop. 3.1 of \cite{HT-abelian}, 
and also Prop. 2.11 of \cite{ABR} for an alternative, elementary and self-contained proof). On the contrary, by a celebrated theorem of 
Faltings, curves of geometric genus at least two are not potentially dense. It is generally believed and fits into 
Lang--Vojta's conjectural framework that the smooth projective varieties of general type are not potentially dense, whereas the varieties with negative or trivial canonical bundle are. Campana conjectures that potentially dense varieties are exactly the special varieties in the sense of \cite{C} (the class of special varieties includes all rationally connected varieties and all varieties of zero Kodaira dimension). But these conjectures are difficult to verify even in dimension two. 

For instance, we still do not know whether a general K3 surface is potentially dense. The potential density has been obtained by Bogomolov and Tschinkel for elliptic K3 surfaces, and also for K3 surfaces with infinite automorphism group (\cite{BT}). But no example of a potentially dense (or of a not potentially dense) K3 surface with Picard number one is available. Only one example of a family of simply-connected varieties with trivial canonical bundle, such that a sufficiently general member has Picard number one and is potentially dense, is known: such are the varieties of lines contained in cubics in $\P^5$ (\cite{AV}, \cite{ABR}). These varieties are deformations of the second punctual Hilbert scheme of certain K3 surfaces. This family is complete in the sense that it exhausts all smooth polarized deformations of its general member. So far, this example remains unique. 

Potential density of the $k$-th punctual Hilbert scheme $S^{[k]}$
of a K3 surface $S$ has been studied by Hassett and Tschinkel in \cite{HT-abelian}. Roughly speaking, 
they prove that $S^{[k]}$ is potentially dense when it admits (birationally) an abelian fibration. In particular,
when $S\subset \P^g$ is a K3 surface of genus $g$, then $S^{[g]}$ is potentially dense (indeed it is birational to the bundle
of $\operatorname{Pic}^g$ of its hyperplane sections).

In the note \cite{A}, the  first-named author proved\footnote{We should also give credit to \cite{OG}, where at some point the problem and the idea of proof is mentioned. At the time, Amerik was not aware of this work.} the potential density for $S^{[2]}$
of certain K3 surfaces $S$ admitting two embeddings as a quartic in $\P^3$, and chosen in such a way that $S$ has only finitely many automorphisms and no elliptic fibrations (otherwise $S$ itself is known to be potentially dense, so it is not interesting to study $S^{[2]}$). These properties are encoded in the N{\'e}ron-Severi lattice of $S$, which she took to be of rank two and with the intersection form $4x^2+14xy+4y^2$. K3 surfaces with this N{\'e}ron-Severi lattice form an open subset of a codimension-one subvariety in the space of quartics in $\P^3$. $S^{[3]}$ is potentially dense by \cite{HT-abelian}. To prove potential density of $S^{[2]}$, the idea was to use an automorphism similar to the one studied by Oguiso in \cite{O}. 

Recall that $S^{[2]}$ of a quartic surface $S$ has a natural involution $\iota$, the Beauville involution, sending an unordered pair of points $x_1+x_2$ to the residual pair of points in the intersection of $S$ and the line through $x_1$ and $x_2$. When $S$ contains no line, $\iota$ is a regular map. When $S$ admits two different embeddings as a quartic, the product $f=\iota_2\iota_1$ of the corresponding Beauville involutions is an automorphism of $S^{[2]}$ 
of infinite order. The idea of \cite{A} was to take a hyperplane section $C\subset S$ with one double point and iterate the surface $\Sigma=C*C\subset S^{[2]}$ (coming from $\operatorname{Sym}^2C$, see \cite{HT-abelian} for the precise definition). The surface $\Sigma$ is birationally abelian and therefore potentially dense, so the potential density of $X=S^{[2]}$ follows
once we know that the union of $f^k(\Sigma)$ is Zariski-dense. This in turn means that $\Sigma$ is not periodic and that the Zariski-closure of $\bigcup_k f^k(\Sigma)$ is not a divisor. Both statements are easily checked by a computation in cohomologies $H^2(X,\Q)\supset NS(X)$ and $H^4(X,\Q)=\operatorname{Sym}^2H^2(X,\Q)$.

The purpose of the present paper is to give a similar example in dimension six. Namely, we consider 
a K3 surface $S$ which admits two projective embeddings as an intersection of a quadric and a cubic in $\P^4$ (still with finite automorphism group and no elliptic fibration).
This leads to two Beauville involutions on $X=S^{[3]}$, but now these are only rational maps even for $S$ generic: the indeterminacy locus of each is a $\P^3$ formed by triples of colinear points on $S$. The product $f$ of two Beauville involutions is a
birational automorphism of $X$. We take for $\Sigma\subset S^{[3]}$ the indeterminacy locus of $\iota_2$, and we prove that the union of $f^k(\Sigma)$ is Zariski-dense. This is done in three steps. 

The proof that the Zariski closure of $\bigcup_k f^k(\Sigma)$ is not a divisor is just a computation in cohomology, which yields, exactly 
as in the 4-dimensional case, that no effective divisor can be $f$-invariant. We also need to prove that $\Sigma$ is not 
periodic, and that the Zariski closure of $\bigcup_k f^k(\Sigma)$ is not four-dimensional. But this time it is not so easy to show this by a cohomological computation, because of two reasons: the cohomology itself is more complicated, and especially,
$f^*$ is not a ring homomorphism anymore. The proof of non-periodicity in Section 6 is an inductive argument which uses the explicit description of the Mori cone of $X$ (Section \ref{mori}). The proof that the Zariski closure of the iterates of $\Sigma$ cannot be four-dimensional (Section \ref{4-dim}) uses symplectic geometry in an essential way. Our key tool is a generalization of Jouanolou-Ghys theorem by Correa, Maza and Soares \cite{CMS}.

Our main result is as follows.

\medskip

\noindent{\bf Theorem:}\label{th: main} Let $S$ be a K3 surface with N{\'e}ron-Severi lattice of rank 2, with intersection form $6x^2+16xy+6y^2$.
Then $X=S^{[3]}$ is potentially dense.

\section{The example} \label{example}
    Let $S$ be a K3 surface and $n > 1$ a positive integer. The N{\'e}ron-Severi lattice of the variety $X = S^{[n]}$ decomposes as the orthogonal direct sum with respect to the Beauville-Bogomolov quadratic form $q$ on $X$:
    \begin{equation} \label{NS-decomp}
        \NS(X) \cong \NS(S) \oplus \mathbb{Z}E,
    \end{equation}
    where $2E$ is the exceptional divisor of the Hilbert-Chow morphism $S^{[n]} \to \Sym^n S$, $q(E) = -2(n - 1)$ and $q\vert_{\NS(S)}$ coincides with the intersection form on $S$ (\cite{B1983}). The group generated by algebraic curve classes modulo homological equivalence $\N_1(X) \subset \H_2(X, \mathbb{Z})$ can be viewed, using $q$, as a subgroup of $\NS(X, \mathbb{Q}) = \NS(X) \otimes \mathbb{Q}$.

    Take a degree 6 K3 surface $S$ of Picard number $\rho(S) = 2$, with intersection form \begin{equation} \label{eq: form}
        b(x, y) = 6x^2 + 16xy + 6y^2
    \end{equation} in some coordinates $(x,y)$.
    Throughout this paper, it is more convenient for us to work with column vectors, so we write $(x, y)^{\T}$ meaning the transposition of the corresponding row vector $(x, y)$.

\begin{proposition}
The surface $S$ has $(-2)$-curves. This surface is not elliptic. The variety $X$ is not rationally fibered in abelian threefolds.
\end{proposition}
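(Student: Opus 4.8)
The three assertions are of increasing difficulty, so I would treat them separately, the first two being pure lattice theory and the last requiring the structure theory of holomorphic symplectic fibrations. For the $(-2)$-curves I would first exhibit a $(-2)$-class: writing $v=(x,y)^{\T}$, the form $b(v)=6x^2+16xy+6y^2$ takes the value $-2$ at $v=(2,-1)^{\T}$, since $24-32+6=-2$. By Riemann--Roch on the K3 surface $S$, any class $D$ with $D^2=-2$ satisfies $\chi(\mathcal{O}_S(D))=2+\tfrac12 D^2=1$, so by Serre duality $h^0(D)+h^0(-D)\ge 1$; replacing $D$ by $-D$ if necessary, $D$ is represented by an effective divisor of negative self-intersection, whose support must contain an irreducible curve $C$ with $C^2<0$. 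By adjunction the only such curves on a K3 surface are smooth rational $(-2)$-curves, which gives the first claim. For non-ellipticity I would invoke the standard criterion that a K3 surface admits an elliptic fibration if and only if $\NS(S)$ contains a nonzero isotropic class. Here the reduced form $3x^2+8xy+3y^2$ has discriminant $8^2-4\cdot 3\cdot 3=28$, which is not a perfect square, so $b$ does not represent zero over $\Q$; hence $\NS(S)$ has no isotropic vector and $S$ is not elliptic.

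The third claim is the substantial one. By Matsushita's theorem, any nontrivial fibration of the irreducible holomorphic symplectic manifold $X=S^{[3]}$ whose general fibre is three-dimensional is a Lagrangian fibration, its general fibre is an abelian threefold, and the pullback $L\in\NS(X)$ of an ample class from the base is a nonzero class with $q(L)=0$. Since birational irreducible symplectic manifolds have isometric Beauville--Bogomolov lattices, the same conclusion holds for a merely rational abelian fibration. Thus it suffices to show that $q$ does not represent zero on $\NS(X)\otimes\Q$. By \eqref{NS-decomp} with $n=3$, and using $q(E)=-4$, this form is $q(x,y,z)=6x^2+16xy+6y^2-4z^2$, where $z$ is the coefficient of $E$.

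It remains to prove that $6x^2+16xy+6y^2=4z^2$ has no nonzero rational solution, equivalently no primitive integral one. Multiplying by $\tfrac32$ the equation becomes $(3x+4y)^2-7y^2=6z^2$, and I would argue by descent at the prime $7$. Reducing modulo $7$ gives $(3x+4y)^2\equiv 6z^2\pmod 7$; since $6\equiv -1$ is a non-residue modulo $7$ (because $7\equiv 3\bmod 4$), both sides must vanish, so $7\mid z$ and $7\mid(3x+4y)$. Writing $z=7z_1$ and $3x+4y=7w$ and dividing by $7$ yields $y^2=7(w^2-6z_1^2)$, whence $7\mid y$, and then $7\mid 3x$ forces $7\mid x$, contradicting primitivity. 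Therefore $q$ is anisotropic over $\Q$, no isotropic class exists, and $X$ is not rationally fibered in abelian threefolds. The main obstacle here is not the arithmetic but correctly reducing the geometric statement to anisotropy of the Beauville--Bogomolov form via the Lagrangian structure theorem; once this reduction is in place, the relevant local obstruction sits precisely at the prime $7$ dividing the discriminant.
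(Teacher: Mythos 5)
Your proposal reaches the correct conclusions and its arithmetic is sound, but it takes a partly different route from the paper. The paper disposes of both the ellipticity of $S$ and the abelian fibration of $X$ in one stroke: citing \cite{AC2008}, either hypothesis forces the Beauville--Bogomolov form to represent zero on $\NS(X)$, i.e.\ $3x^2+8xy+3y^2-2z^2=0$ to have a nontrivial integer solution, which is then excluded by a descent at the prime $2$ (reduction modulo $8$ in the odd case, division by $2$ in the even case). You instead split the cases: for ellipticity you observe that the binary form has discriminant $28$, not a perfect square, hence is anisotropic over $\Q$ --- slicker than any descent and perfectly valid --- and for the ternary form you run a descent at $7$, using that $-1$ is a quadratic non-residue modulo $7$; I checked the computation $(3x+4y)^2-7y^2=6z^2$ and the divisibility chain $7\mid z,\ 7\mid 3x+4y,\ 7\mid y,\ 7\mid x$, and it is correct, an equally good alternative to the paper's $2$-adic argument. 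Your expansion of the first claim (Riemann--Roch and adjunction to pass from the $(-2)$-class $(2,-1)^{\T}$ to an actual $(-2)$-curve) is the standard argument the paper leaves implicit.

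The one step I would not accept as written is your reduction of the \emph{rational} abelian fibration to Matsushita's theorem. The assertion that ``birational irreducible symplectic manifolds have isometric Beauville--Bogomolov lattices'' does not by itself transfer a rational fibration on $X$ to a holomorphic one on some birational model: resolving the indeterminacy destroys the symplectic structure, and the existence of a birational hyperk\"ahler model on which a rational Lagrangian fibration becomes regular is a genuinely deep statement (close to the hyperk\"ahler SYZ problem, known for $K3^{[n]}$-type only through the work of Markman and Bayer--Macr\`i), certainly not a consequence of lattice isometry. The softer and correct route --- and exactly what the paper's citation \cite{AC2008} supplies --- is to work with the meromorphic map directly: pull back an ample class from the base through a resolution of indeterminacy and push it down to $X$; the resulting nonzero divisor class cannot be big, and the results of \cite{AC2008} (resting on the Fujiki relation) then force its Beauville--Bogomolov square to vanish. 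With that substitution in place of the lattice-isometry remark, your proof is complete.
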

\begin{proof}
    The class $(2, -1)^{\T}$ is an example of a
    $(-2)$-class, so the surface $S$ has $(-2)$-curves. If the surface $S$ is elliptic or the variety $X$ is rationally fibered in abelian threefolds, then by \cite{AC2008} the form $q$ represents zero, i.e. $3x^2 + 8xy + 3y^2 - 2z^2 = 0$ for some $x, y, z \in \mathbb{Z}$. If $2\nmid x$, then counting modulo 8 yields the contradiction. If $2\mid x$, then both $y$ and $z$ are even, so applying the same argument to the triple $(x/2, y/2, z/2)$ implies $x = y = z = 0$. \qed
\end{proof}

\begin{proposition}
    The group $|\Aut(S)|$ is finite.
\end{proposition}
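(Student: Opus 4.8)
The plan is to prove that $\Aut(S)$ is finite using the well-known criterion for K3 surfaces of finite automorphism group in terms of the Néron-Severi lattice. For a K3 surface, $\Aut(S)$ is finite if and only if the positive cone contains only finitely many $(-2)$-classes up to the action of the Weyl group, equivalently the automorphism group is finite precisely when the effective/nef cone is rational polyhedral. For Picard rank $2$ this is governed by whether the quadratic form $b$ is isotropic (represents zero) over $\Q$: if the form represents zero the cone is bounded by two irrational rays and $\Aut(S)$ is infinite, whereas if $b$ does not represent zero then the boundary of the positive cone is cut out by finitely many $(-2)$-curves and the automorphism group is finite.

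\medskip

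The key computation is therefore to verify that $b(x,y) = 6x^2 + 16xy + 6y^2$ does not represent zero over $\Q$ (equivalently over $\Z$ with $(x,y)\neq(0,0)$). This is exactly the kind of argument already carried out in the previous Proposition: one reduces modulo a small prime power. First I would observe that $b(x,y)=0$ would force the discriminant condition; dividing out common factors of $2$ one may assume $x,y$ are not both even, and then reducing $3x^2+8xy+3y^2\equiv 0$ modulo $8$ (after extracting the factor $2$) shows that no nontrivial solution exists, by the same descent used above. Since $b$ is anisotropic, the positive cone of $S$ is strictly larger than the rational hull of the $(-2)$-curves, and the boundary rays of the ample cone are spanned by $(-2)$-classes.

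\medskip

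With anisotropy established, I would appeal to the structure theory of the automorphism group via the Torelli theorem: $\Aut(S)$ maps with finite kernel and cofinite image into the group of isometries of $\NS(S)$ preserving the ample cone. For a rank-$2$ lattice, the orthogonal group $O(\NS(S))$ is infinite (containing hyperbolic isometries) precisely when the form is isotropic; in the anisotropic case the relevant subgroup preserving the positive cone and the nef cone is finite. Concretely, the ample cone is a rational polyhedral cone bounded by two $(-2)$-rays, and any automorphism must permute these finitely many boundary rays, so the image in $O(\NS(S))$ is finite; combined with the finiteness of the kernel of the representation on cohomology (which follows since an automorphism acting trivially on $\NS(S)$ and on the transcendental lattice is the identity for a K3 of this type), we conclude $\Aut(S)$ is finite.

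\medskip

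The main obstacle is making precise the passage from ``$b$ anisotropic'' to ``the automorphism group preserving the ample cone is finite''. The cleanest route is to cite the classification of K3 surfaces with finite automorphism group, in particular the rank-$2$ case where finiteness is equivalent to the non-representation of zero together with the non-existence of a class $v$ with $b(v)=0$ or the absence of certain exceptional configurations; the Proposition above has already ruled out the isotropic case, so the remaining verification is essentially bookkeeping with the two boundary $(-2)$-classes. I expect the whole argument to reduce to quoting the lattice-theoretic criterion and pointing to the explicit $(-2)$-classes bounding the ample cone.
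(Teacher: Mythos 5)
There is a genuine gap: the lattice-theoretic criterion you invoke is incorrect, and the error is load-bearing. For a K3 surface of Picard rank $2$, the theorem of Piatetski-Shapiro and Shafarevich (the very result the paper cites for this proposition) says that $\Aut(S)$ is infinite if and only if $\NS(S)$ represents \emph{neither} $0$ \emph{nor} $-2$; equivalently, $\Aut(S)$ is finite as soon as the form represents $0$ or represents $-2$. Your version --- ``isotropic $\Rightarrow$ infinite, anisotropic $\Rightarrow$ finite'' --- fails in both directions. If $b$ represents $0$, the boundary rays of the positive cone are \emph{rational} (not irrational, as you claim), the surface is elliptic with Mordell--Weil rank $\rho-2=0$, and $\Aut(S)$ is finite (e.g.\ $\NS(S)=U$). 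Conversely, anisotropy alone does not give finiteness: for instance the form $2x^2+10xy+2y^2$ represents neither $0$ (discriminant $21$ per unit, not a square) nor $-2$ (reduce $(2x+5y)^2-21y^2=-4$ modulo $3$), and a K3 surface with this N\'eron--Severi lattice has \emph{infinite} automorphism group --- lattices representing neither $0$ nor $-2$ are precisely the Oguiso-type source of infinite $\Aut$ at rank $2$. So your central implication ``$b$ anisotropic $\Rightarrow$ $\Aut(S)$ finite'' is false, and your mod-$8$ descent, though correct, does not by itself prove the proposition.

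What rescues the argument in this example --- and what your sketch omits --- is the $(-2)$-class: the paper's first proposition exhibits $(2,-1)^{\T}$ with $b(2,-1)=-2$, and at rank $2$ the representation of $-2$ is exactly what makes the Weyl group generated by $(-2)$-reflections of finite index in the (infinite, since $b$ is anisotropic) orthogonal group, so that the ample chamber is bounded by two $(-2)$-walls and $\Aut(S)$, permuting finitely many walls and acting with finite kernel on cohomology, is finite. Your third paragraph sketches precisely this mechanism, but you derive the polyhedrality of the ample cone from anisotropy rather than from the existence of a $(-2)$-class, which is where the logic breaks (in the ``anisotropic, no $-2$'' case the ample cone equals the whole positive cone, with irrational boundary rays, and the automorphism group is infinite). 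The paper's own proof is a one-line citation of \cite{PS1971}, section 7; a corrected version of your write-up should state the criterion accurately and then point both to the non-representation of $0$ (the previous proposition, specialized to $z=0$) and to the explicit $(-2)$-class $(2,-1)^{\T}$, either of which, under the correct reading of the criterion, rules out infinite $\Aut(S)$ here.
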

\begin{proof}
    It is proven in \cite{PS1971}, section 7, that a K3 surface with Picard number 2 has an infinite automorphism group if and only if its intersection form represents both $-2$ and $0$. \qed
\end{proof}

\begin{proposition}
    There are two very ample classes $h_1$ and $h_2$ on the surface $S$ with self-intersection 6.
\end{proposition}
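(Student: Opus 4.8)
The plan is to exhibit the two classes explicitly and then to verify ampleness and very ampleness by hand, using the facts already established above that the intersection form does not represent $0$ and that $\Aut(S)$ is finite. The natural candidates are $h_1 = (1,0)^\T$ and $h_2 = (0,1)^\T$: both are primitive and satisfy $h_i \cdot h_i = 6$, and they are interchanged by the lattice isometry swapping the two coordinates, so it suffices to treat one of them and invoke this symmetry for the other. Writing the Gram matrix as $M = \bigl(\begin{smallmatrix} 6 & 8 \\ 8 & 6 \end{smallmatrix}\bigr)$ and $v\cdot w = v^\T M w$ for the intersection pairing, I would first record that $h_1\cdot h_2 = 8 > 0$, so $h_1$ and $h_2$ lie in one and the same connected component $\mathcal{C}^{+}$ of the positive cone $\{v : v\cdot v > 0\}$; this is the component used throughout to orient the effective and ample cones.

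Next I would show that $h_1$ is ample. Since $h_1 \in \mathcal{C}^{+}$, by Nakai--Moishezon it suffices to check $h_1 \cdot C > 0$ for every irreducible curve $C$. Any such $C$ with $C^2 \ge 0$ lies in $\overline{\mathcal C^{+}}$; as $b$ has no isotropic class this forces $C^2 > 0$, and then $h_1 \cdot C > 0$ automatically, so the only curves to control are the $(-2)$-curves. Here I would use that $\Aut(S)$ is finite, so that $\overline{\NE}(S)$ is rational polyhedral and, having rank $2$ and no isotropic rays, is spanned by exactly two irreducible $(-2)$-curves. Solving $v\cdot v = -2$, i.e.\ the Pell-type equation $u^2 - 7y^2 = -3$ with $u = 3x + 4y$, one finds that the primitive $(-2)$-classes accumulate only toward the two irrational isotropic rays bounding $\mathcal C^{+}$, and that the two lying closest to these rays (equivalently, the extremal ones in $\overline{\NE}(S)$) are $(2,-1)^\T$ and $(-1,2)^\T$. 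Thus $\overline{\NE}(S) = \R_{\ge 0}(2,-1)^\T + \R_{\ge 0}(-1,2)^\T$, and since $h_1 \cdot (2,-1)^\T = 4 > 0$ and $h_1 \cdot (-1,2)^\T = 10 > 0$ (and symmetrically $h_2 \cdot (2,-1)^\T = 10$, $h_2\cdot(-1,2)^\T = 4$), both $h_1$ and $h_2$ lie in the interior of the nef cone and are ample.

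Finally I would upgrade ampleness to very ampleness via Saint-Donat's analysis of linear systems on K3 surfaces. For an ample $L$ with $L^2 \ge 4$, the only obstructions to very ampleness are: the existence of an irreducible curve $E$ with $E^2 = 0$ and $L\cdot E \le 2$ (base points, or a hyperelliptic/elliptic-pencil map), the case $L \equiv 2B$ with $B^2 = 2$, and the contraction of a $(-2)$-curve. The first obstruction is vacuous because $b$ does not represent $0$, so $S$ carries no curve of self-intersection $0$ (this is also why $S$ is not elliptic); the second fails because $h_i$ is primitive, hence not of the form $2B$; and the third cannot occur since $h_i$ is ample, so $h_i \cdot C \ge 4 > 0$ on every $(-2)$-curve. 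Hence $h_1$ and $h_2$ are very ample, each embedding $S$ as a surface of degree $6$ and genus $4$ in $\P^4$.

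I expect the main obstacle to be the precise determination of the Mori cone: one must rule out the a priori possibility of further, more extreme irreducible $(-2)$-curves, and this is exactly where the accumulation analysis of the Pell solutions toward the irrational isotropic rays, together with the finiteness of $\Aut(S)$, is essential. Once the two $(-2)$-curves $(2,-1)^\T$ and $(-1,2)^\T$ are pinned down, both the ampleness check and the Saint-Donat very-ampleness step reduce to the routine numerical verifications above.
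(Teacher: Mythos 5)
Your plan shares the paper's skeleton (same candidates $h_1=(1,0)^{\T}$, $h_2=(0,1)^{\T}$, the same Pell reduction $t^{2}-7y^{2}=-3$ with $t=3x+4y$, and the same Saint-Donat endgame), but it has a genuine gap at the ampleness step: you never justify that, in the given lattice coordinates, the ample chamber is the one containing $(1,0)^{\T}$ and $(0,1)^{\T}$. The observation $h_1\cdot h_2=8>0$ only places both classes in one component $\mathcal{C}^{+}$ of the positive cone; the ample cone is a single Weyl chamber inside $\mathcal{C}^{+}$, and there are infinitely many chambers, since $v^{\perp}$ meets $\mathcal{C}^{+}$ for every $(-2)$-class $v$ (for instance $(2,-1)^{\perp}$ contains the positive class $(5,-2)^{\T}\in\mathcal{C}^{+}$). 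Consequently your assertion $\overline{\NE}(S)=\R_{\geq 0}(2,-1)^{\T}+\R_{\geq 0}(-1,2)^{\T}$ is not something the Pell equation alone can deliver: a priori the effective cone could be any Weyl translate, e.g.\ $\big\langle (-2,1)^{\T},(31,-14)^{\T}\big\rangle_{\R_{\geq 0}}$ (the image of your cone under the reflection in $(2,-1)^{\T}$), and in that case $h_1\cdot(-2,1)^{\T}=-4<0$, so $h_1$ would not be ample in these coordinates. The paper closes exactly this hole first: it checks that no $(-2)$-class is orthogonal to $h_1$ (if $6x+8y=0$ then $3\mid y$, incompatible with $v^{2}=-2$), so $h_1$ lies in the interior of some chamber, and then recoordinatizes by Picard--Lefschetz reflections --- isometries preserving the form $b$ --- to assume $h_1$ ample. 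You need this normalization (or an equivalent one) before your Mori-cone statement is meaningful; the proposition's phrasing permits it, since it only asserts the existence of two very ample classes of square $6$.

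After that normalization your route does work, but it is heavier than necessary and its decisive step is only sketched: the appeal to $|\Aut(S)|<\infty$ for rational polyhedrality of $\overline{\NE}(S)$ is superfluous, and the claim that every effective $(-2)$-class lies in $\big\langle(2,-1)^{\T},(-1,2)^{\T}\big\rangle_{\R_{\geq 0}}$ is precisely the content you dismiss with ``one finds.'' The paper replaces the entire cone computation by a two-line estimate: for a $(-2)$-class with $t=3x+4y>0$ one has $t^{2}=7y^{2}-3$, hence $4t>7y$, so every $(-2)$-class pairs with $h_1$ and $h_2$ with the same sign and the two classes lie in the same chamber; this proves $h_2$ ample without ever identifying $\overline{\NE}(S)$ (the paper computes a Mori cone only later, for $X$ rather than $S$, in Section~\ref{mori}). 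Your very-ampleness step --- no curve of square $0$ because $b$ does not represent $0$, no $h_i=2B$ by primitivity, no contracted $(-2)$-curve by ampleness --- matches the paper's use of \cite{SD1974} and is fine.
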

\begin{proof}
    We repeat the argument of \cite{A}. Denote by $h_1$ and $h_2$ the classes of line bundles corresponding to the classes $(1, 0)^{\T}$ and $(0, 1)^{\T}$. Observe that there are no classes $v = (x, y)^{\T}$ with $v^2 = -2$ and $v \cdot h_1 = 0$. Indeed, if such a class $v$ exists,  then $6x + 8y = 0$ and $6x^2 + 16xy + 6y^2 = -2$. Using the first equation we get that $3 \mid y$ which contradicts the second equation. Therefore, applying some sequence of Picard-Lefschetz reflections if necessary, we may assume that the class $h_1$ is ample. 

    Now, we show that the class $h_2$ is automatically ample too. It is enough to verify that intersections of the classes $h_1$ and $h_2$ with any $(-2)$-class have the same sign. Let $v = (x, y)^{\T}$ be a $(-2)$-class intersecting $h_1$ positively, i.e. $3x^2 + 8xy + 3y^2 = -1$ and $3x + 4y > 0$. Denoting $t = 3x + 4y$ we get that $t^2 - 7y^2 = -3$ and $t > 0$. The equality $t^2 = 7y^2 - 3$ implies $4t > 7y$, so $v \cdot h_2 = \frac{1}{3}(4t - 7y) > 0$. Thus, the class $h_2$ is ample.

    It remains to show that classes $h_1$ and $h_2$ are in fact very ample. Since for any $(-2)$-curve class $L$, we have $L \cdot h_1 \notin \{0, 1\}$, then by \cite{SD1974}, 2.7 the divisor $h_1$ has no fixed components. Any divisor having positive self-intersection without fixed components is either very ample or hyperelliptic. By \cite{SD1974}, 5.4 the latter is not the case, since the form $q$ does not represent 0 and there is no curve $C'$ with $2[C'] = h_1$. \qed
\end{proof}

Without loss of generality, we assume in what follows that $h_1=(1, 0)^{\T}$ and $h_2=(0, 1)^{\T}$ are very ample. 

\begin{lemma}
    The classes $2h_1 - h_2$ and $2h_2 - h_1$ correspond to $(-2)$-curves.
\end{lemma}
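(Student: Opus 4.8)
The plan is to first verify that both classes are $(-2)$-classes, then to identify their effective representatives, and finally to establish irreducibility. From the form \eqref{eq: form} one reads off $h_1^2=h_2^2=6$ and $h_1\cdot h_2=8$, whence $(2h_1-h_2)^2 = 4\cdot 6 - 4\cdot 8 + 6 = -2$, and by the $x\leftrightarrow y$ symmetry of the form also $(2h_2-h_1)^2=-2$. By Riemann--Roch on the K3 surface $S$, any class $D$ with $D^2=-2$ has $\chi(\mathcal O_S(D)) = 2 + \tfrac12 D^2 = 1$, and since $S$ has trivial canonical class Serre duality gives $h^0(D)+h^0(-D)\ge \chi(\mathcal O_S(D)) = 1$; as $D\ne 0$, exactly one of $\pm D$ is effective. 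Because $h_1$ is ample and $(2h_1-h_2)\cdot h_1 = 2\cdot 6 - 8 = 4>0$ while $(h_2-2h_1)\cdot h_1 = -4<0$, the effective representative must be $2h_1-h_2$, and symmetrically $2h_2-h_1$.

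It then remains to prove that $D:=2h_1-h_2$ is represented by an irreducible curve. The key observation I would exploit is that $v\cdot h_1 = 6x+8y$ is \emph{even} for every $v=(x,y)^{\T}\in\NS(S)$; since $h_1$ is ample, every irreducible curve $C$ therefore has $C\cdot h_1\in\{2,4,6,\dots\}$. As $D\cdot h_1 = 4$, decomposing $D=\sum_i a_iC_i$ into its irreducible components leaves only a very short list of possibilities: either $D=C_1$ is itself irreducible (of $h_1$-degree $4$), or $D=2C_1$ with $C_1$ of $h_1$-degree $2$, or $D=C_1+C_2$ with two distinct components each of $h_1$-degree $2$. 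The middle case is impossible because $\tfrac12 D = h_1-\tfrac12 h_2$ is not integral.

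To eliminate the last case I would show that there is no irreducible curve of $h_1$-degree $2$ whatsoever. The classes $v=(x,y)^{\T}$ with $v\cdot h_1=2$, i.e.\ $3x+4y=1$, are exactly $v=(4t-1,\,1-3t)^{\T}$, $t\in\Z$, and substituting into \eqref{eq: form} gives $v^2 = -42t^2+28t-4$, which is $\le -4$ for every integer $t$ (as a downward parabola with vertex at $t=\tfrac13$ of height $\tfrac23$, its largest integer value is $v^2|_{t=0}=-4$, and it decreases as $|t-\tfrac13|$ grows). Since an irreducible curve must satisfy $v^2\ge -2$, no such curve exists, ruling out $D=C_1+C_2$. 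Hence $D$ is irreducible with $D^2=-2$, i.e.\ a smooth rational $(-2)$-curve; the class $2h_2-h_1$ is handled identically, using that $v\cdot h_2=8x+6y$ is likewise even. The only genuine content is this final step: the self-intersection and the determination of the sign are immediate, whereas excluding a nontrivial decomposition is where the arithmetic of the form really enters, through the evenness of the intersection degrees together with the absence of low-degree irreducible curves.
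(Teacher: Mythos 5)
Your proposal is correct and follows essentially the same route as the paper: compute $(2h_1-h_2)^2=-2$, get effectivity from Riemann--Roch with the sign fixed by ampleness of $h_1$, and exclude a nontrivial decomposition via the evenness of $h_1$-degrees and the arithmetic of the form. The only cosmetic difference is that the paper extracts from a hypothetical decomposition a $(-2)$-class component $C'$ with $h_1\cdot C'=2$ and derives the contradiction $7y^2=4$, whereas you show every class of $h_1$-degree $2$ satisfies $v^2=-42t^2+28t-4\le -4$ and invoke the adjunction bound $C^2\ge -2$ for irreducible curves --- the same Pell-type computation, packaged in a slightly more self-contained way.
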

\begin{proof}
    It is enough to prove the statement for the class $2h_1 - h_2$. Since $(2h_1 - h_2)^2 = -2$, either the class $2h_1 - h_2$ or the class $-(2h_1   - h_2)$ is effective. The class $h_1$ is ample and $(2h_1 - h_2) \cdot h_1 = 4$, so the class $2h_1 - h_2$ is effective and corresponds to some curve $C$. Suppose that the curve $C$ is not irreducible or not reduced. Then we can write $$[C] = [C'] + [C'']$$ for some effective classes $[C'], [C'']$ with $\left(C'\right)^2 = -2$. Since $$4 = h_1 \cdot [C] > h_1 \cdot [C'] > 0$$ and $2 \mid \left(h_1 \cdot [C']\right)$, then $h_1 \cdot [C'] = 2$. Now, if we write $[C'] = xh_1 + yh_2$, then this condition reads as $3x + 4y = 1$, and the condition $\left(C'\right)^2=-2$ is equivalent to $(3x + 4y)^2 - 7y^2 = -3$. The equation $7y^2 = 4$ has no integer solutions, therefore, the curve $C$ is reduced and irreducible. \qed
\end{proof}

\begin{proposition}
	There exist K3 surfaces $S$ defined over a number field, with $\rho(S)=2$ and the intersection form $b$ as in \eqref{eq: form}.
\end{proposition}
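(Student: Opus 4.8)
The plan is to prove existence of such $S$ over a number field via the surjectivity of the period map for K3 surfaces together with a density argument.

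The plan is to produce such an $S$ first over $\mathbb{C}$ by Hodge theory, and then to descend to a number field through complex multiplication. First I would record the lattice-theoretic input. The Gram matrix of $b$ is $\left(\begin{smallmatrix} 6 & 8 \\ 8 & 6 \end{smallmatrix}\right)$, which is even, has determinant $-28$, and has eigenvalues $14$ and $-2$; hence $(\Z^2, b)$ is an even lattice of signature $(1,1)$. By Nikulin's criterion for primitive embeddings of even lattices into the even unimodular K3 lattice $\Lambda = U^{3} \oplus E_8(-1)^2$ of signature $(3,19)$ — the numerical conditions are satisfied trivially here, since $\operatorname{rk}(\Z^2,b) + \ell(A_b) \le 2 + 2 < 22$ and the signs $(1,1)$ fit inside $(3,19)$ — there is a primitive embedding $(\Z^2, b) \hookrightarrow \Lambda$, unique up to isometry. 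Let $T$ denote its orthogonal complement, an even lattice of signature $(2,18)$ and rank $20$.

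Next I would construct a complex K3 surface with Néron--Severi lattice exactly $(\Z^2, b)$. Any $\omega \in \P(T \otimes \mathbb{C})$ with $q(\omega) = 0$ and $q(\omega, \bar\omega) > 0$ defines a weight-two Hodge structure of K3 type on $\Lambda$, whose $(1,1)$-part meets $\Lambda$ exactly in the embedded copy of $b$ for general $\omega$ (the loci acquiring extra algebraic classes form a countable union of proper analytic subsets). By the surjectivity of the period map together with the Torelli theorem, such a period is realized by a complex K3 surface $S$ with $\NS(S) \cong (\Z^2, b)$; since $b$ represents positive values, $S$ carries an ample class and is projective.

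The descent to a number field is the main point, and the cleanest route is through CM points. Rather than trying to avoid the countably many Picard-jumping loci among the $\overline{\mathbb{Q}}$-points of the relevant Noether--Lefschetz locus — which is \emph{not} automatic, precisely because $\overline{\mathbb{Q}}$ is countable — I would build $\omega$ directly as a CM period. Choose a CM field $E$ of degree $20 = \operatorname{rk} T$ together with a CM type of K3 type (one embedding and its conjugate spanning $T^{2,0} \oplus T^{0,2}$, the remaining eighteen conjugate pairs spanning $T^{1,1}$), and realize the associated polarized Hodge structure on $T \otimes \Q$ so that $E = \operatorname{End}_{\mathrm{Hdg}}(T \otimes \Q)$ acts, making $T \otimes \Q$ a one-dimensional $E$-vector space. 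Since its ring of Hodge endomorphisms is a field and polarizable Hodge structures form a semisimple category, $T \otimes \Q$ is then a \emph{simple} Hodge structure; as it is not purely of type $(1,1)$, it carries no nonzero Hodge classes, so the resulting K3 surface still has $\NS \cong (\Z^2, b)$. The point so obtained is a special (CM) point on the orthogonal period domain, hence algebraic for the canonical model of the moduli space over a number field, and the corresponding K3 surface is therefore defined over $\overline{\mathbb{Q}}$.

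The main obstacle is exactly the content of the last paragraph: one must (i) exhibit a CM field $E$ and a CM type satisfying the Riemann relations, so that the associated Hodge structure actually lies in the period domain of $T$ and is realized on the prescribed lattice — this is the subject of Zarhin-type results on CM Hodge structures of K3 type — and (ii) invoke the fact that CM points on these Shimura varieties of orthogonal type are defined over $\overline{\mathbb{Q}}$ and that the corresponding K3 surface descends to $\overline{\mathbb{Q}}$, via the theory of canonical models or the Kuga--Satake correspondence combined with CM theory for abelian varieties. Everything else — the signature computation, the Nikulin embedding, and the Torelli/surjectivity package — is routine.
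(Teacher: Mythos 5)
Your proposal is correct in outline but takes a genuinely different route from the paper. The paper's proof is two sentences long: the surfaces with N\'eron--Severi lattice $(\Z^2,b)$ form a component of the Noether--Lefschetz locus inside the family of intersections of a quadric and a cubic in $\P^4$; this component is an algebraic variety defined over a number field, and the theorem of Maulik--Poonen \cite{MP} guarantees that among its number-field points, most members have the same N\'eron--Severi group as the general member. This disposes of exactly the difficulty you correctly flag --- since $\overline{\Q}$ is countable, one cannot simply ``avoid'' the countably many Picard-jumping loci among algebraic points --- but by citation rather than by construction. Your alternative bypasses the jumping loci via complex multiplication: Nikulin embedding of $(\Z^2,b)$ into the K3 lattice (your lattice computations --- evenness, determinant $-28$, signature $(1,1)$ --- are right), Torelli plus surjectivity of the period map, then a CM Hodge structure on the rank-$20$ transcendental space $T\otimes\Q$ with CM field $E$ of degree exactly $20$, so that $T\otimes\Q$ is $E$-simple and carries no Hodge classes, and finally descent of CM K3 surfaces to $\overline{\Q}$ (a theorem of Rizov). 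This strategy is sound, and to your credit you isolate the genuinely hard step yourself; but be aware that your step (i) carries essentially all the weight and needs a sharper reference than ``Zarhin-type results'': Zarhin's theorem goes the \emph{other} way (it classifies which endomorphism fields can occur), whereas you need the existence of a degree-$20$ CM field acting on the \emph{specific} quadratic space $T\otimes\Q$ of signature $(2,18)$ with prescribed discriminant, i.e., an embedding of the corresponding maximal torus into the orthogonal group of $T\otimes\Q$; this is supplied by Bayer-Fluckiger's work on embeddings of maximal tori in orthogonal groups, or by Taelman's existence results for K3 surfaces with CM, and it is a nontrivial local-global statement, not a formality. The trade-off is clear: your route produces an explicit CM surface and avoids \cite{MP}, at the cost of heavy Shimura-variety and lattice-arithmetic machinery; the paper's route is shorter, stays entirely within the geometry of the family at hand, and works uniformly for any Noether--Lefschetz component.
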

\begin{proof}
    Such a K3 surface over $\mathbb{C}$ is a general member of a component of the Noether-Lefschetz locus of the family of intersections of a quadric and a cubic in $\mathbb{P}^4$. This component is an algebraic subvariety defined over a number field. Now the only problem is that it could, apriori, happen that every surface from this locus which is defined over a number field has higher Picard number; but this is ruled out by \cite{MP}, which shows that in any family (defined over a number field) of smooth projective varieties, most members over a number field have the same N{\'e}ron-Severi group as the general member. \qed
\end{proof}

\section{Birational automorphism and its invariant divisors} \label{inv}

The classes $h_1$ and $h_2$ define two embeddings $S \subset \mathbb{P}^4$ as an intersection of a quadric and a cubic. Two embeddings induce two Beauville involutions $\iota_1, \iota_2: S^{[3]} \dashrightarrow S^{[3]}$, see \cite{B1982}. Recall that a Beauville involution of $S^{[3]}$ sends a general triple of points to the residual triple in the intersection of $S$ and the plane through the triple. Denote by $H_1, H_2 \in \NS(X) \simeq \NS(S) \oplus \mathbb{Z}E$ the classes corresponding to $h_1, h_2 \in \NS(S)$ respectively. The induced morphism $\iota_k: \NS(X) \to \NS(X)$ $(k \in \{1, 2\})$ is opposite to the reflection in the hyperplane $(H_k - E)^{\perp}$ (\cite{D1984}). In other words, the action of the homomorphism $\iota_k$ is given by $$\iota_k^*(L) = -L + 2\frac{q(L, H_k - E)}{q(H_k - E, H_k - E)}(H_k - E) = -L + q(L, H_k - E)(H_k - E).$$
In the basis $\{H_1, E, H_2\}$ the operators $\iota_1^*, \iota_2^*$ are given by matrices  
\begin{equation} \label{eq: matrices}
M_1 = \begin{pmatrix}
    5 & 4 & 8 \\
    -6 & -5 & -8 \\
    0 & 0 & -1 
\end{pmatrix}, \qquad M_2 = \begin{pmatrix}
    -1 & 0 & 0 \\
    -8 & -5 & -6 \\
    8 & 4 & 5 
\end{pmatrix}.
\end{equation}

The following construction is described in \cite{B1982}. Denote by $\tilde{\Pi}_k$ the indeterminacy set of the involution $\iota_k$. The set $\tilde{\Pi}_k$ consists of length 3 subschemes contained in a line. The image of $S$ under the embedding given by $h_k$ is defined by the intersection of the quadric $Q_k$ and some cubic hypersurface. Any line intersecting $S$ in at least 3 points is contained in $Q_k$, so the indeterminacy set is parametrized by the lines in $Q_k$, i.e. $\tilde{\Pi}_k \simeq \mathbb{P}^3$. The involution $\iota_k$ is biregular on the set $X \setminus \tilde{\Pi}_k$. Now, denote by $\epsilon_k: \widehat{X}_k \to X$ the blow-up of $X$ in $\tilde{\Pi}_k$. The exceptional divisor $E_k$ of this blow-up is isomorphic to the projectivization of the conormal bundle $\mathbb{P}\left(N^{*}_{X/\tilde{\Pi}_k}\right)$. Since $\Pi_k \simeq \mathbb{P}^3$ and is lagrangian, conormal is the same as tangent, and $$\mathbb{P}\left(N^{*}_{X/\tilde{\Pi}_k}\right) \simeq \big\{(x, H) \in \mathbb{P}^3 \times \left(\mathbb{P}^3\right)^{\vee}\big\}.$$
The map $\iota_k$ extends to a biregular involution of $\widehat{X}_k$ which maps $E_k$ to itself.

Define a birational automorphism $$f = \iota_2 \circ \iota_1: X \dashrightarrow X.$$
\begin{proposition} \label{prop: action}
    1) The action of $f^{*}$ on $\NS(X)$ in the basis $\{H_1, E, H_2\}$ is given by the matrix $$ M_1M_2 = \begin{pmatrix} 27 & 12 & 16 \\
-18 & -7 & -10 \\
-8 & -4 & -5 \end{pmatrix}.$$
2) $(f^n)^{*} = \left(f^*\right)^n$.
\end{proposition}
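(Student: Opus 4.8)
The plan is to derive both statements from one structural fact: each Beauville involution $\iota_k$, and therefore the composite $f=\iota_2\circ\iota_1$ together with all its iterates, is a \emph{pseudo-automorphism} of $X$, i.e. an isomorphism in codimension one. On the group generated by such maps the assignment $g\mapsto g^*$ is an anti-homomorphism to $GL(\NS(X))$, and Parts 1) and 2) are exactly the two instances of this functoriality.

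First I would record the codimension bookkeeping. By the description recalled above, $\iota_k$ is biregular on $X\setminus\tilde{\Pi}_k$ with $\tilde{\Pi}_k\simeq\P^3$ of codimension three in the six-dimensional $X$; hence $\iota_k$ contracts no divisor and is an isomorphism in codimension one. (This is also forced by the general fact that a birational map between irreducible holomorphic symplectic manifolds never contracts a divisor, but here we can see it explicitly.) For smooth $X$ the restriction $\operatorname{Pic}(X)\to\operatorname{Pic}(X\setminus Z)$ is an isomorphism whenever $\operatorname{codim}Z\ge 2$, so $\iota_k^*$ on $\NS(X)$ is literally the pullback induced by the open isomorphism $\iota_k\colon X\setminus\tilde{\Pi}_k\xrightarrow{\sim}X\setminus\tilde{\Pi}_k$; this strict-transform action is the one computed by the reflection formula, giving $M_1$ and $M_2$.

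For Part 1) the content is that $f$ is again a pseudo-automorphism, so the two open isomorphisms compose and $f^*=\iota_1^*\circ\iota_2^*$ (the order reversed by contravariance), which in the basis $\{H_1,E,H_2\}$ is the product $M_1M_2$. The point to check is that $f$ develops no divisorial indeterminacy and contracts no divisor: its indeterminacy lies in $\tilde{\Pi}_1\cup\iota_1^{-1}(\tilde{\Pi}_2)$, and since $\iota_1$ is an isomorphism in codimension one the set $\iota_1^{-1}(\tilde{\Pi}_2)$ still has codimension $\ge 2$; moreover if $f$ contracted a divisor $D$ then $\iota_1(D)$ would be a divisor carried by $\iota_2$ either into lower dimension or into $\tilde{\Pi}_2$, both impossible since $\iota_2$ contracts no divisor and $\tilde{\Pi}_2$ has codimension three.

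For Part 2) I would observe that pseudo-automorphisms of $X$ form a group under composition and inversion — the same codimension count applies verbatim to any composite — so every $f^n$ is a pseudo-automorphism and multiplicativity of the strict-transform pullback yields $(f^n)^*=(f^*)^n$ at once. This is precisely the algebraic stability that can fail for a general birational self-map but holds here because no iterate ever carries a divisor into an indeterminacy locus. I expect the only genuine obstacle to be making this last bookkeeping airtight, i.e. confirming that one never leaves the group of pseudo-automorphisms. Finally it is worth stressing that the multiplicativity is peculiar to the action on $\NS(X)\subset H^2(X)$: on $H^4(X)$ the operator $f^*$ is not a ring homomorphism, which is exactly why the later sections cannot simply propagate these matrix identities to higher cohomology.
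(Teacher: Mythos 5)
Your proposal is correct and takes essentially the same route as the paper: the paper's own proof derives both parts from the single fact that birational automorphisms of varieties with trivial canonical bundle contract no hypersurfaces, i.e.\ are isomorphisms in codimension one, so that pullback on $\NS(X)$ is anti-functorial, giving $f^*=\iota_1^*\circ\iota_2^*=M_1M_2$ and $(f^n)^*=(f^*)^n$. Your explicit codimension bookkeeping (biregularity of $\iota_k$ off the codimension-three $\tilde{\Pi}_k\simeq\P^3$, closure of the pseudo-automorphism property under composition) merely spells out in detail the general fact the paper invokes in one line, and you correctly note the general statement as an alternative justification.
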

\begin{proof}
	The first part of the proposition is the equality $(\iota_2 \circ \iota_1)^{*} = \iota_1^{*} \circ \iota_2^{*}$, which holds because birational automorphisms of varieties with trivial canonical bundle contract no hypersurfaces. The second part is true by the same reason. \qed

\end{proof}

\smallskip

The eigenvalues of the operator $f^{*}: \operatorname{NS}(X) \to \operatorname{NS}(X)$ are $1, 7 - 4\sqrt{3}, 7 + 4\sqrt{3}$ with eigenvectors $$2H_1 - 7E + 2H_2,\, (-2 + \sqrt{3})H_1 + (1 - \sqrt{3})E + H_2,\, (-2 - \sqrt{3})H_1 + (1 + \sqrt{3})E + H_2$$ respectively. Since $7 + 4\sqrt{3} > 1$, the map $f$ is of infinite order.
\begin{proposition}
    The map $f$ is not induced by any birational automorphism of $S$. Moreover, it is not induced by any birational automorphism of a K3 surface $S'$ with $S^{[3]} \simeq (S')^{[3]}$.
\end{proposition}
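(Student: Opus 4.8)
The plan is to exploit the distinguished role of the class $E$ (half the Hilbert--Chow exceptional divisor), which behaves functorially under induced automorphisms, and to observe that as an eigenvector of $f^*$ it would have the wrong Beauville--Bogomolov square. First I would recall two standard facts. A birational automorphism of a K3 surface is automatically biregular, since K3 surfaces are their own minimal models; so any birational automorphism of $S$ (resp.\ $S'$) is an honest automorphism $g$ (resp.\ $g'$). Such an automorphism induces a biregular automorphism $g^{[3]}$ of $S^{[3]}$ (resp.\ $(g')^{[3]}$ of $(S')^{[3]}$), and because the Hilbert--Chow exceptional divisor is canonically defined and preserved by any induced automorphism, $(g^{[3]})^{*}$ fixes the class $E$ (resp.\ $((g')^{[3]})^{*}$ fixes the corresponding distinguished class $E'$ of $(S')^{[3]}$).

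Now suppose, for contradiction, that $f$ is induced by an automorphism $g'$ of such an $S'$, i.e.\ $f = \phi^{-1}\circ (g')^{[3]} \circ \phi$ for some isomorphism $\phi\colon S^{[3]} \to (S')^{[3]}$. Passing to cohomology gives $f^{*} = \phi^{*} \circ ((g')^{[3]})^{*} \circ (\phi^{*})^{-1}$, so $f^{*}$ fixes $\phi^{*}(E')$. As computed above, the three eigenvalues $1,\,7-4\sqrt{3},\,7+4\sqrt{3}$ of $f^{*}$ are distinct, so the eigenvalue-$1$ eigenspace is the line spanned by $v = 2H_1 - 7E + 2H_2$; hence $\phi^{*}(E') = \lambda v$ for some $\lambda \in \Q$ (it is rational, being an integral class proportional to the rational vector $v$). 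On the other hand $\phi$ is an isomorphism, so $\phi^{*}$ is an isometry for $q$, and therefore $q(\phi^{*}(E')) = q(E') = -2(3-1) = -4$.

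The contradiction is then a one-line arithmetic check: using $q(H_i) = 6$, $q(H_1, H_2) = 8$, $q(E) = -4$ and $q(H_i, E) = 0$, one finds $q(v) = -84$. Thus $q(\phi^{*}(E')) = \lambda^{2} q(v) = -84\lambda^{2}$, and equating this to $-4$ forces $\lambda^{2} = 1/21$, which is impossible for $\lambda \in \Q$ since $21$ is not a perfect square. This rules out the second assertion, and the first follows as the special case $S' = S$, $\phi = \mathrm{id}$ (equivalently, one notes directly that $f^{*}(E) = 12H_1 - 7E - 4H_2$ is not a multiple of $E$).

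The only point requiring genuine care — the main "obstacle", such as it is — is the justification that an induced automorphism must fix the integral distinguished class and that $\phi^{*}$ transports it to a fixed class of square exactly $-4$; this rests on the canonicity of the Hilbert--Chow exceptional divisor and on $\phi^{*}$ being an isometry of the Beauville--Bogomolov lattices. Everything past that reduces to the eigenvalue description already established and the elementary fact that $-4/(-84) = 1/21$ is not a rational square.
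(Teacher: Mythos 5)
Your proposal is correct and follows essentially the same route as the paper: the paper likewise observes that an induced automorphism must fix the class $E'$ of half the Hilbert--Chow exceptional divisor, that the eigenvalue-$1$ eigenspace of $f^*$ is spanned by $2H_1 - 7E + 2H_2$ with $q(2H_1-7E+2H_2) = -84$, and derives the contradiction $-4 = -84k^2$. Your write-up merely makes explicit some points the paper leaves implicit (birational automorphisms of K3 surfaces are biregular, $\phi^*$ is a $q$-isometry, the eigenvalues are distinct), all of which are accurate.
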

\begin{proof}
    If $f$ is induced by an automorphism of $S'$, then $f^{*}(E') = E'$, where $2E'$ is an exceptional divisor of the morphism $(S')^{[3]} \to \operatorname{Sym}^3 S$. Then we get $E' = k(2H_1 - 7E + 2H_2)$ for some $k \in \mathbb{Q}$. However, this is not possible since $$-4 = (E')^2 = k^2 \cdot (2H_1 - 7E + 2H_2)^2 = -84k^2. \eqno \qed$$
\end{proof}
\begin{proposition} \label{prop: eff_inv}
No effective divisor on $X$ is invariant under $f$.
\end{proposition}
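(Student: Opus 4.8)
The plan is to combine two observations: an $f$-invariant effective divisor forces an $f^*$-fixed cohomology class, and the unique (up to scale) $f^*$-fixed class turns out to be \emph{anti}-invariant under each $\iota_k^*$, which is incompatible with effectivity. First I would suppose, for contradiction, that $D$ is a nonzero effective divisor whose support is stable under $f$. As was already used in the proof of Proposition \ref{prop: action}, a birational automorphism of a variety with trivial canonical bundle contracts no hypersurface, so $f$ is an isomorphism in codimension one and $f^*$ merely permutes the classes of the prime components $D_1,\dots,D_m$ of $D$. Hence the reduced class $\delta=\sum_{i=1}^m[D_i]$ is effective, fixed by $f^*$, and nonzero (it pairs positively with any ample class). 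Since by Proposition \ref{prop: action} the eigenvalues $1,\,7-4\sqrt{3},\,7+4\sqrt{3}$ of $f^*$ are distinct, the fixed space of $f^*$ is the line spanned by $v=2H_1-7E+2H_2$, so $\delta\in\R v$.

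It then remains to show that no nonzero multiple of $v$ is effective, and this is where the two involutions enter. Using $q(E,E)=-4$ together with the intersection form \eqref{eq: form}, I would compute $q(v,H_k-E)=0$ for $k=1,2$; that is, $v$ lies on both reflection mirrors $(H_k-E)^{\perp}$. Because $\iota_k^*$ is the reflection in $(H_k-E)^{\perp}$ composed with $-\operatorname{id}$, it acts as $-\operatorname{id}$ on the mirror, so $\iota_k^*v=-v$ and consequently $\iota_2^*\delta=-\delta$. On the other hand $\iota_2$ is a birational involution whose indeterminacy locus $\tilde{\Pi}_2\simeq\mathbb{P}^3$ has codimension three, so it too is an isomorphism in codimension one and $\iota_2^*$ sends effective divisor classes to effective divisor classes. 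Thus both $\delta$ and $-\delta$ would be effective; pairing with an ample class $A$ and invoking the Fujiki relation gives simultaneously $q(\delta,A)>0$ and $q(-\delta,A)>0$, a contradiction.

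The only genuine content here is the orthogonality $q(v,H_k-E)=0$, which records the geometric fact that the $f^*$-fixed direction sits on both Beauville mirrors; everything else is the formal principle that a map contracting no divisor preserves effectivity. I expect the step to watch most carefully to be the reduction itself: ``invariant'' must be read as stability of the support, so that $f^*$ permutes the finitely many prime components and it is the \emph{reduced} sum $\sum_i[D_i]$, rather than $[D]$ with its multiplicities, that is genuinely $f^*$-fixed and may be fed into the eigenvalue computation. Once that is set up correctly, the anti-invariance $\iota_k^*v=-v$ closes the argument by pure cohomology, exactly as in the four-dimensional case of \cite{A}.
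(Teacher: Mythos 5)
Your proof is correct and follows essentially the same route as the paper: identify the $f^*$-fixed line as $\R(2H_1-7E+2H_2)$ via the distinct eigenvalues, then rule out effectivity because a Beauville involution (the paper uses $\iota_1$, you use $\iota_2$; this is immaterial) sends this class to its negative while preserving effective divisor classes. Your additional care in passing to the reduced class $\sum_i[D_i]$ and in verifying $q(v,H_k-E)=0$ just makes explicit what the paper leaves implicit.
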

\begin{proof}
    The only invariant divisors on $X$ are multiples of $2H_1 - 7E + 2H_2$. Such a divisor is not effective since $$\iota_1^{*}\left(2H_1 - 7E + 2H_2\right) = -(2H_1 - 7E + 2H_2). \eqno \qed$$
\end{proof}

\section{The Mori cone} \label{mori}
In this section, we find the closure of the cone of effective curves $NE_1(X)$ (viewed as a part of the real N\'eron-Severi group by means of $q$) and the ample cone $\Amp(X)$ of $X$. Recall that by Kleiman's criterion, the divisor $D$ is ample if and only if it is strictly positive on the closure of $NE_1(X)$.

The main tool for the computation is the classification of extremal rays established in \cite{HT1} and \cite{HT2}.
\begin{lemma}
    $$\overline{\NE_1(X)} = \big\langle E, H_1 - \frac{3}{2}E, H_2 - \frac{3}{2}E, 2H_1 - H_2 - \frac{1}{2}E, 2H_2 - H_1 - \frac{1}{2}E \big\rangle_{\R_{\geq 0}}.$$ 
\end{lemma}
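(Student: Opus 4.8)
The plan is to prove the two inclusions separately, working in the rank‑three lattice $\NS(X)$ whose Gram matrix in the basis $\{H_1,E,H_2\}$ is
\[
G=\begin{pmatrix} 6 & 0 & 8 \\ 0 & -4 & 0 \\ 8 & 0 & 6 \end{pmatrix}
\]
(using $q(E)=-2(n-1)=-4$ and $q(H_1,H_2)=8$). This form has signature $(1,2)$, so the positive cone $\mathcal{C}=\{q>0\}$ is a round cone; since for a projective hyperk\"ahler variety $\operatorname{Nef}(X)\subseteq\overline{\mathcal C}$, dualizing gives $\overline{\NE_1(X)}\supseteq\overline{\mathcal C}$. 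Hence every extremal ray lying outside $\overline{\mathcal C}$ has negative square, and it suffices to locate these finitely many spacelike rays. Writing $K$ for the cone generated by the five listed classes, I must establish $K\subseteq\overline{\NE_1(X)}$ (effectivity) and $\overline{\NE_1(X)}\subseteq K$; by Kleiman duality the latter is equivalent to $K^{\vee}\subseteq\operatorname{Nef}(X)$.

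For effectivity I would exhibit each generator as the class of an explicit rational curve. The ray $E$ is the ruling of the exceptional divisor of the Hilbert--Chow morphism $\pi\colon X\to\Sym^3 S$, which is contracted and hence extremal. The other four generators are lines in Lagrangian copies of $\P^3$: the indeterminacy loci $\tilde\Pi_1,\tilde\Pi_2\simeq\P^3$ of $\iota_1,\iota_2$ account for $H_1-\tfrac32E$ and $H_2-\tfrac32E$, while the $(-2)$‑curves $C_1=2h_1-h_2$ and $C_2=2h_2-h_1$ give Lagrangian $\P^3$'s $C_i^{[3]}=\operatorname{Hilb}^3(\P^1)$, whose lines account for $2H_i-H_j-\tfrac12E$ (with $\{i,j\}=\{1,2\}$). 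To confirm the classes I would restrict the relevant divisors to $\Sym^3 C_i\simeq\P^3$: there $H_k|_{C_i^{[3]}}=\mathcal O(h_k\cdot C_i)$ and $2E|_{C_i^{[3]}}$ is the discriminant of binary cubics, of degree $2(n-1)=4$, so a line meets $H_k$ in $h_k\cdot C_i$ points and $E$ in $2$ points; for $C_1$ this yields exactly $2H_1-H_2-\tfrac12E$, and symmetrically for $C_2$. An analogous (slightly more involved) computation on $\tilde\Pi_k$ — or the observation, read off from $M_k$, that $\iota_k^{*}$ negates $H_k-\tfrac32E$ — pins down the two indeterminacy classes.

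The harder inclusion is $\overline{\NE_1(X)}\subseteq K$, obtained by proving $K^{\vee}\subseteq\operatorname{Nef}(X)$. Here I invoke the Hassett--Tschinkel classification of extremal rays for varieties of $K3^{[n]}$ type: a negative extremal class $R$ satisfies the bound $q(R)\ge-\tfrac{n+3}{2}=-3$ together with a divisibility condition on $q\big(R,\NS(X,\Z)\big)$. The four $\P^3$‑classes realize the extreme value $q=-3$ and $E$ is the divisorial (Hilbert--Chow) ray, so these five are precisely the contraction rays one expects. To certify that nothing further occurs I would compute the five extreme rays of $K^{\vee}$ and check each is nef. Two have vanishing $E$‑component, namely $5H_1-2H_2$ and $5H_2-2H_1$; these lie on the Hilbert--Chow wall and are pullbacks under $\pi$ of the classes $5h_i-2h_j$ on $S$, which are nef because they are nonnegative on both $(-2)$‑curves $C_1,C_2$ and have positive square. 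The remaining three vertices lie on the flopping walls of the Lagrangian $\P^3$'s and are nef as pullbacks from the associated flopping contractions. Convexity of $\operatorname{Nef}$ then upgrades nefness of the vertices to $K^{\vee}\subseteq\operatorname{Nef}(X)$, giving equality.

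The main obstacle is this last step. Because $q$ is indefinite of signature $(1,2)$ and $\operatorname{Bir}(X)$ is infinite ($f$ has infinite order), there are infinitely many lattice classes meeting the numerical Hassett--Tschinkel bound — for instance the entire $f^{*}$‑orbit of $H_1-\tfrac32E$ consists of square‑$(-3)$ classes — so one cannot conclude by naive enumeration. The genuine content is that the ample chamber is exactly the explicit pentagonal cone $K^{\vee}$ and that no further wall crosses its interior; concretely this reduces to verifying that the three flop‑wall vertices of $K^{\vee}$ are truly nef, which is where identifying the flopping contractions of the four Lagrangian $\P^3$'s — not merely their numerical classes — becomes essential.
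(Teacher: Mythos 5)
Your identification of the five generators and their geometric origins (ruling of the Hilbert--Chow exceptional divisor, lines in the Lagrangian $\mathbb{P}^3$'s $\tilde\Pi_1,\tilde\Pi_2$ and $C_i^{[3]}$, with the line-class computations via degrees of $H_k$ and the discriminant) is correct and matches the remark the paper places after the lemma. But there is a genuine gap in the hard inclusion $\overline{\NE_1(X)}\subseteq K$: you reduce it to nefness of the three vertices $11H_1-3H_2-7E$, $11H_2-3H_1-7E$, $3H_1+3H_2-7E$ of $K^{\vee}$, assert they are nef ``as pullbacks from the associated flopping contractions,'' and then concede in your last paragraph that verifying this is exactly ``where identifying the flopping contractions \dots becomes essential'' --- that is, you stop at the step that constitutes the actual content of the lemma. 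Knowing that these classes lie on genuine flopping walls presupposes that the nef cone extends all the way out to them, which is precisely what had to be proven; as written the argument is circular. (A smaller unproved assertion of the same kind: deducing nefness of $5h_1-2h_2$ on $S$ from nonnegativity on $C_1,C_2$ alone presupposes that these two $(-2)$-curves span $\overline{\NE_1(S)}$, which also needs justification.)

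Moreover, your stated reason why this step is hard --- that infinitely many lattice classes, e.g.\ the $f^{*}$-orbit of $H_1-\frac{3}{2}E$, meet the Hassett--Tschinkel square bound $q(R)\geq -3$, so ``one cannot conclude by naive enumeration'' --- is where you diverge from the paper, and the pessimism is misplaced. The paper invokes \cite{HT1}, \cite{HT2} not as a mere square bound but as a complete numerical characterization of the extremal rays: a primitive $R=xH_1+yH_2+zE$ spans an extremal ray if and only if $q(R,R)\in\{-2,-4,-12,-36\}$ with specified divisibility conditions on $x,y$ and $q(R,H_1)\geq 0$. Granting this, one never enumerates all classes meeting the bound: the paper assumes an extremal class violates one of the five linear inequalities cutting out $K$ (say $x+2y<0$), sets $t=3x+4y>0$, and derives an inequality of the shape $\frac{3}{2}q(R,R)=t^2-7y^2-6z^2<-3y^2-6z^2$; since $q(R,R)\geq -36$, this bounds $(y,z)$ and hence $x$, leaving finitely many candidate classes, which a brute-force check eliminates. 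So the enumeration is only over numerically extremal classes \emph{outside} $K$ --- a finite set --- and it is exactly this bounded search, absent from your proposal, that closes the argument you leave open.
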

\begin{proof}
    Let $R = xH_1 + yH_2 + zE$ be a primitive element of $N_1(X)$.
    By \cite{HT1} and \cite{HT2} the ray generated by $R$ is an extremal ray of $\overline{\NE_1(X)}$ if and only if it satisfies one of the following properties
    \begin{itemize}
        \item $q(R, R) = -2, q(R, H_1) \geq 0$;

        \item $q(R, R) = -4, q(R, H_1) \geq 0$, and $x, y$ are both divisible by 2 but not by for 4;

        \item $q(R, R) = -4, q(R, H_1) \geq 0$, and $x, y$ are both divisible by 4;

        \item $q(R, R) = -12, q(R, H_1) \geq 0$, and $x, y$ are both divisible by 2;

        \item $q(R, R) = -36, q(R, H_1) \geq 0$, and $x, y$ are divisible by 4.
    \end{itemize}
    It can be easily checked that the classes given in the statement of this lemma satisfy one of the properties above.
    
    Now, assume that $R$ generates an extremal ray. If $q(R, H_1) = 0$, then $(x, y) = (4t, 3t)$ for some $t \in \Z$. Thus $(R, R) = -42t^2 - 4z^2$ which is possible only for $R = E$. Therefore, we may assume that $R$ intersects positively with both $H_1$ and $H_2$. Now, write
    \begin{multline*}\label{eq: NE}
        \big\langle E, H_1 - \frac{3}{2}E, H_2 - \frac{3}{2}E, 2H_1 - H_2 - \frac{1}{2}E, 2H_2 - H_1 - \frac{1}{2}E \big\rangle_{\R_{\geq 0}} = \\ = \big\{xH_1 + yH_2 + zE\,:\,x + 2y \geq 0, 2y + x \geq 0, 3x + 5y + 2z \geq 0, 3x + 5y + 2z \geq 0, \\ 3x + 3y + 2z \geq 0\big\}.
    \end{multline*}
    Let $R = xH_1 + yH_2 + zE$ be an extremal class.
    We need to check that numbers $x, y, z$ satisfy all five inequalities above. By symmetry, it is enough to prove inequalities $x + 2y \geq 0, 5x + 3y + 2z \geq 0$ and $3x + 3y + 2z \geq 0$. Observe that $\frac{3}{2}q(R, R) = t^2 - 7y^2 - 4z^2$, where $t = 3x + 4y > 0$.

    \begin{enumerate}  
        \item Suppose that $x + 2y < 0$. Thus, $t + 2y < 0$ and we get $$\frac{3}{2}q(R, R) = t^2 - 7y^2 - 4z^2 < -3y^2 - 4z^2.$$ Recall that $q(R, R) \in \{-2, -4, -12, -36\}$, so there are finitely many such $R$ and it can be shown by brute force that in fact there are no such $R$.

        \item Suppose that $3x + 5y + 2z < 0$. Thus, $t < -2z - y$ and we get $$\frac{3}{2}q(R, R) = t^2 - 7y^2 - 6z^2 < 6y^2 - 4yz - 2z^2 \leq -2y^2 - z^2.$$ By the same argument, it can be checked that there are no such $R$.

        \item Suppose that $3x + 3y + 2z < 0$. Thus, $t < 2z - y$ and we get $$\frac{3}{2}q(R, R) = t^2 - 7y^2 - 6z^2 < 6y^2 + 4yz - 2z^2 \leq -2y^2 - z^2.$$ This case is analogous to the previous one.  \qed

    \end{enumerate}

    \begin{remark} The classes generating the cone of curves are interpreted geometrically as follows.
\begin{itemize}
            \item It is well-known that the class of the ruling of the exceptional divisor of the Hilbert-Chow morphism is 
		    $\frac{1}{4}E$.

            \item According to \cite{HT2}, example 14, the classes $H_1 - \frac{3}{2}E, H_2 - \frac{3}{2}E$ are the classes of lines in projective subspaces $\tilde{\Pi}_1, \tilde{\Pi}_2$ defined in the section \ref{inv}.

            \item According to \cite{HT1}, example 4.11, the classes $2H_2 - H_1 - \frac{1}{2}E, 2H_1 - H_2 - \frac1{2}E$ are the classes of lines in projective subspaces $C_1^{[3]}, C_2^{[3]}$, where $C_1$ and $C_2$ are $(-2)$-curves having classes $2h_1 - h_2$ and $2h_2 - h_1$ respectively.

        \end{itemize}
    \end{remark}

	By duality, one obtains the following result.

    \begin{proposition} \label{prop: amp}
        The closure of the ample cone of $X$ is generated by the following classes
        \begin{multline*}
            \overline{\Amp(X)} = \big\langle 5H_1 - 2H_2, 5H_2 - 2H_1, 11H_1 - 3H_2 - 7E, 11H_2 - 3H_1 - 7E, 3H_1 + 3H_2 - 7E \big\rangle_{\mathbb{R}_{\geq 0}}.
        \end{multline*}
    \end{proposition}

	\begin{remark} From this description of the ample cone one can deduce that $X$ has only finitely many automorphisms.
			Indeed the projectivization of the cone of classes of positive square in the real N\'eron-Severi group can be seen as a model of the hyperbolic space $\H$. The projectivized ample cone of $X$ inside $\H$ is rational polyhedral of finite volume. According to \cite{AV-hyperb}, its quotient by $Aut(X)$ is one of finitely many pieces (of strictly positive volume) of the quotient orbifold of
			$\H$ by the monodromy group, hence $Aut(X)$ must be finite.
	\end{remark}

	This explains the necessity to work with a birational, rather than a biregular, automorphism, in spite of some technical difficulties. The following proposition shall be useful in dealing with them.

    \begin{proposition} \label{prop: inter}
        The intersection of $\tilde{\Pi}_1$ and $\tilde{\Pi}_2$ is empty.
    \end{proposition}
    \begin{proof}
        Let $C \subset S$ be the $(-2)$-curve in the class $2h_1 - h_2$. Then the image of $C$ under the embedding given by the class $h_1$ has degree $h_1 \cdot (2h_1 - h_2) = 4$. Assume that $C^{[3]} \cap \tilde{\Pi}_1 \neq \emptyset$. Then there is a trisecant line to a degree 4 rational curve $C \subset \mathbb{P}^4$. As a rational normal curve does not have a trisecant, it follows that $C$ must be contained in a hyperplane. Hence the class $h_1$ must be the sum of $2h_1 - h_2$ and an effective class. Since the class $h_2 - h_1$ is not effective, we conclude that the intersection $C^{[3]} \cap \tilde{\Pi}_1$ is empty. Now, observe that $\iota_1\left(C^{[3]}\right) = \tilde{\Pi}_2$ since $\iota_1^*(2H_1 - H_2 - \frac1{2}E) = H_2 - \frac{3}{2}E$. Since $\iota_1$ restricted to the complement of $\tilde{\Pi}_1$ is an automorphism, the intersection $\tilde{\Pi}_1 \cap \tilde{\Pi}_2$ is also empty. \qed  
    \end{proof}

    \begin{figure}[h]
\centering
\begin{tikzpicture}[main/.style = {fill, draw, circle, minimum width=2pt, inner sep=0pt}]
    \node[main] (x) at (0, 0) [label={-60: $E$}] {};
\node[main] (x1) at (4, 2) [label = {-60:$2H_1 - H_2 - \frac1{2}E$}] {};
\node[main] (x2) at (2, 4) [label = {60:$H_1 - \frac{3}{2}E$}] {};
\node[main] (y1) at (-4, 2) [label = {-120:$2H_2 - H_1 - \frac{1}{2}E$}] {};
\node[main] (y2) at (-2, 4) [label = {120: $H_2 - \frac{3}{2}E$}] {};

\draw[-] (0, 0) to (4, 2);
\draw[-] (4, 2) to (2, 4);
\draw[-] (2, 4) to (-2, 4);
\draw[-] (-2, 4) to (-4, 2);
\draw[-] (-4, 2) to (0, 0);

\draw (3, 0.6) node{$\left(5H_1 - 2H_2\right)^{\perp}$};
\draw (-2.9, 0.6) node{$\left(5H_2 - 2H_1\right)^{\perp}$};
\draw (0, 4.3) node{$\left(3H_1 + 3H_2 - 7E\right)^{\perp}$};
\draw (5, 3) node{$\left(11H_1 - 3H_2 - 7E\right)^{\perp}$};
\draw (-4.8, 3) node{$\left(11H_2 - 3H_1 - 7E\right)^{\perp}$};

\end{tikzpicture}
\caption{Transverse section of $\overline{NE_1(X)}$}
    \label{fig: loops}
\end{figure}
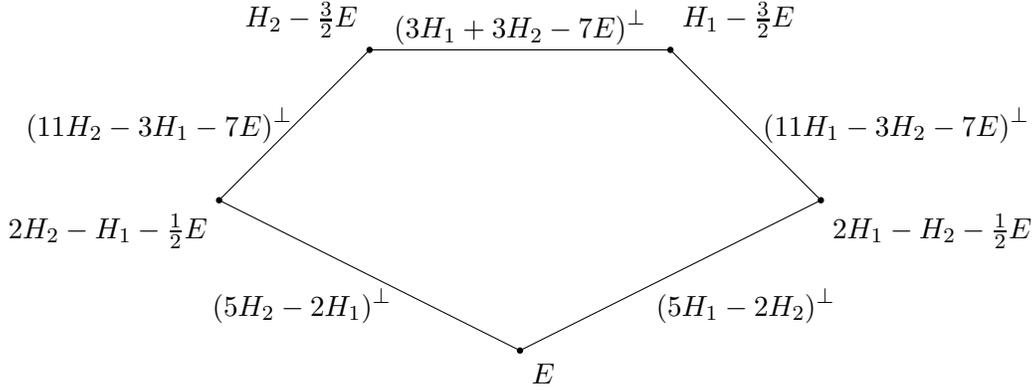
    
\end{proof}

\section{Four-dimensional case} \label{4-dim}

To prove the potential density, we shall investigate the set of $f$-iterates of a suitable $\mathbb{P}^3$ in $X$ and prove that it is Zariski-dense. We have already seen that it cannot be 5-dimensional (Proposition \ref{prop: eff_inv}).
In this section, we exclude the case of dimension four by proving the following proposition.

\begin{proposition} \label{prop: 4-dim}
	Let $X$ be an irreducible holomorphic symplectic sixfold\footnote{An irreducible holomorphic symplectic, or hyperk\"ahler, manifold is a compact K\"ahler manifold with $H^{2,0}$ generated by a single nowhere degenerate form $\sigma$. The punctual Hilbert scheme of a K3 surface is the best-known example.}.  
	If $Y \subset X$ is a subvariety with $\dim Y = 4$, then there is only a finite number of rationally connected lagrangian subvarieties $\Pi_i \subset X$ contained in $Y$.
\end{proposition}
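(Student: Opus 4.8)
The plan is to restrict the symplectic form $\sigma$ to $Y$ and study the holomorphic foliation cut out by its kernel. First I would fix a resolution $\nu\colon \tilde{Y}\to Y$ with $\tilde{Y}$ smooth and compact, and set $\tilde{\omega}=\nu^{*}\sigma$, a closed holomorphic $2$-form. If $\tilde{\omega}\equiv 0$ then $Y$ is an isotropic subvariety of dimension $4$, which is impossible since an isotropic subvariety of a symplectic sixfold has dimension at most $3$; hence $\tilde{\omega}\neq 0$, and as a $2$-form on a fourfold it has generic rank $2$ or $4$. The basic observation I would record at once is that every $\Pi_i$ lies in the locus where $\tilde{\omega}$ drops rank: a lagrangian $\Pi_i$ is isotropic, so its $3$-dimensional tangent space is isotropic for $\tilde{\omega}$, which is possible only where $\operatorname{rank}\tilde{\omega}\le 2$.

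The rank-$4$ case is then immediate. The degeneracy locus $D=\{\tilde{\omega}\wedge\tilde{\omega}=0\}=\{\operatorname{rank}\tilde{\omega}\le 2\}$ is a proper analytic subset of dimension at most $3$, and each strict transform of a $\Pi_i$ is contained in $D$. Since $\dim\Pi_i=3$ and $\Pi_i$ is irreducible, it must coincide with one of the finitely many irreducible components of $D$, so there are finitely many $\Pi_i$. The same component argument disposes of any $\Pi_i$ contained in the locus $\{\tilde{\omega}=0\}$ or in the singular locus of $Y$, both of which are proper closed subsets of dimension $\le 3$.

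The substantial case is when $\tilde{\omega}$ has generic rank $2$. On the open set $U=\{\operatorname{rank}\tilde{\omega}=2\}$ the kernel $\ker\tilde{\omega}$ is a rank-$2$ subbundle of $T\tilde{Y}$, and it is integrable because $d\tilde{\omega}=0$; thus it defines a singular codimension-$2$ holomorphic foliation $\mathcal{F}$ on $\tilde{Y}$. For each $\Pi_i$ meeting $U$, the $3$-dimensional isotropic space $T\Pi_i$ must contain the $2$-dimensional radical $\ker\tilde{\omega}$ at a general point, so $\Pi_i$ is a union of leaves, that is, an $\mathcal{F}$-invariant hypersurface of $\tilde{Y}$ (those $\Pi_i$ disjoint from $U$ are again finite in number by the previous argument). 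At this point I would invoke the generalization of the Jouanolou--Ghys theorem due to Correa, Maza and Soares: a holomorphic foliation on a compact complex manifold which admits infinitely many invariant hypersurfaces possesses a meromorphic first integral. Consequently, if the $\Pi_i$ were infinite in number, all but finitely many of them would be fibers of this first integral, hence would move in a positive-dimensional family inside $\tilde{Y}$, and therefore inside $X$; in particular $h^{0}(\Pi_i,N_{\Pi_i/X})\ge 1$ for infinitely many $i$.

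The final step, where symplectic geometry is essential, contradicts this movement by a rigidity statement. A rationally connected lagrangian $\Pi\subset X$ is infinitesimally rigid: the symplectic form identifies $N_{\Pi/X}$ with the cotangent bundle $\Omega^{1}_{\Pi}$, and a rationally connected variety carries no nonzero holomorphic $1$-forms, so $H^{0}(\Pi,N_{\Pi/X})=H^{0}(\Pi,\Omega^{1}_{\Pi})=0$; thus the Hilbert scheme of $X$ is a reduced point at $[\Pi]$ and $\Pi$ has no nontrivial deformation. This is incompatible with having infinitely many moving $\Pi_i$, and proves the proposition. The main obstacle I anticipate is technical rather than conceptual: the $\Pi_i$ need not be smooth and may sit in the singular locus of $Y$, so the normal-bundle isomorphism and the vanishing $H^{0}(\Omega^{1})=0$ must be phrased on a resolution of each $\Pi_i$ and transported back, and one must verify that the foliation $\mathcal{F}$ together with the hypotheses of the Correa--Maza--Soares theorem are set up on the genuinely compact model $\tilde{Y}$ rather than on the possibly singular $Y$.
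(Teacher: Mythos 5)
Your proposal tracks the paper's proof up to the invocation of Correa--Maza--Soares: resolve $Y$, pull back $\sigma$, note the pullback is nonzero because a fourfold cannot be isotropic, and observe that every $\Pi_i$ is invariant (the paper deduces this from the absence of holomorphic forms on rationally connected varieties; your rank-stratification, including the elementary rank-$4$ case via the degeneracy locus $\{\tilde\omega\wedge\tilde\omega=0\}$, is a fine variant, though the paper needs no case split since it applies the theorem directly to $\tilde\omega$ as a codimension-$2$ Pfaff system). The genuine gap comes after the first integral. You assert that all but finitely many $\Pi_i$ ``would be fibers of this first integral, hence would move.'' The theorem gives only one direction: the \emph{fibers} of the first integral are invariant hypersurfaces; it does not say that invariant hypersurfaces are fibers. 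A toy model shows the inference is false: take $\tilde Y = A\times B$ with $A$ a K3 surface carrying its symplectic form $\omega_A$ and $\tilde\omega=\mathrm{pr}_A^*\omega_A$. The kernel foliation has leaves $\{a\}\times B$, and every $C\times B$ with $C\subset A$ a curve is an invariant hypersurface (a union of leaves). If $A$ has an elliptic pencil there is a first integral with fibers $C_t\times B$, yet for $C$ any of the possibly infinitely many rigid $(-2)$-curves on $A$, the hypersurface $C\times B$ is invariant, is not a fiber, and does not deform. So ``infinitely many invariant hypersurfaces $\Rightarrow$ infinitely many of them move'' fails, and with it your intended contradiction with rigidity.

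Your rigidity step is also weaker than you allow: the identification $N_{\Pi/X}\cong\Omega^1_\Pi$ requires $\Pi$ smooth, while the proposition permits singular $\Pi_i$, and the two-dimensional prototype shows that passing to a resolution cannot repair this: a nodal rational curve on a K3 surface is a singular rationally connected lagrangian whose normalization is $\mathbb{P}^1$ with $H^0(\Omega^1)=0$, yet it moves in its linear system. The paper sidesteps both problems by running the contradiction through the \emph{form} rather than through deformation theory of the $\Pi_i$: after flattening the first integral to $\widehat{f}\colon\widehat{Y}\to C$, it uses constructibility of the locus of rationally connected fibers (the reference [DF]) to conclude that the generic fiber is rationally connected; since such fibers carry no holomorphic $2$-forms, the Pfaff form is pulled back from the curve $C$, hence vanishes --- contradicting $\tilde\omega\neq 0$. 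To salvage your route you would need both an argument placing infinitely many $\Pi_i$ inside fibers of the first integral and smoothness of the $\Pi_i$; neither is available at the proposition's level of generality, so as written the proof does not go through.
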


Before proving the proposition, we recall some definitions.

\begin{definition}
    Let $M$ be a smooth compact complex manifold of dimension $n$. A Pfaff system $\mathcal{F}$ of codimension $r$ on $M$ is a non-trivial section $\omega \in \H^0(M, \Omega^r \otimes_{\mathcal{O}_M} L$, where $\Omega_M^r$ 
	denotes the sheaf of holomorphic $r$-forms 
	and $L$ is some holomorphic line bundle.

    An irreducible hypersurface $Z \subset Y$ is called $\omega$-invariant if $\omega|_{Z} = 0$.
\end{definition}

\begin{definition}
A first integral of the Pfaff system $\mathcal{F}$ defined by the form $\omega$ is a non-constant meromorphic map $f: M \to C$ for some curve projective smooth $C$ such that fibers of $f$ are $\omega$-invariant hypersurfaces.
\end{definition}

The following theorem, usually stated as the fact that a codimension-one holomorphic foliation with infinitely many closed leaves is a fibration, was proven in \cite{J} with some extra assumptions, satisfied for projective manifolds, and then in \cite{G}.

\begin{theorem}[\textbf{Jouanolou-Ghys}] Let $\mathcal{F}$ be a Pfaff system of codimension 1 on a connected compact complex manifold $M$, defined by the form $\omega \in \H^0(M, \Omega^1_M \otimes_{\mathcal{O}_M} L)$. If $\omega$ does not admit a first integral, then the number of $\omega$-invariant hypersurfaces is not greater than $$\dim \left[\H^0\left(M, \Omega^2_M \otimes_{\mathcal{O}_M} L \right)\big/\omega \wedge \H^0\left(M, \Omega^1_M\right)\right] + \rho(M) + 1.$$
\end{theorem}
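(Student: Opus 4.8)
The plan is to run the Darboux--Jouanolou argument in the form refined by Ghys, so that the Picard \emph{number} $\rho(M)$, and not the full rank of $\operatorname{Pic}(M)$, controls the count. Let $Z_1,\dots,Z_N$ be the distinct irreducible $\omega$-invariant hypersurfaces, let $s_i$ be a section of $L_i=\mathcal{O}_M(Z_i)$ cutting out $Z_i$, and consider the logarithmic differentials $\tfrac{ds_i}{s_i}$. The first point is that $\omega$-invariance is exactly the cancellation of poles: choosing local coordinates with $Z_i=\{z_1=0\}$ and $s_i=z_1\cdot(\text{unit})$, the condition $\omega|_{Z_i}=0$ reads $\omega=a\,dz_1+z_1\beta$ for a function $a$ and a $1$-form $\beta$ (both valued in $L$), whence $\omega\wedge\tfrac{ds_i}{s_i}=\beta\wedge dz_1+(\text{holomorphic})$ has no pole along $Z_i$. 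Thus each $\omega\wedge\tfrac{ds_i}{s_i}$, and every combination of them with holomorphic coefficients, lies in $\H^0(M,\Omega^2_M\otimes_{\mathcal{O}_M}L)$.

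The main technical input I would use is the residue exact sequence for logarithmic forms on the compact K\"ahler (here projective) manifold $M$, with $D=\sum_iZ_i$:
$$0\to\H^0(M,\Omega^1_M)\to\H^0(M,\Omega^1_M(\log D))\xrightarrow{\ \mathrm{Res}\ }\bigoplus_i\mathbb{C}\xrightarrow{\ \partial\ }\H^2(M,\mathbb{C}),$$
where $\mathrm{Res}$ records the residues and $\partial(\lambda)=\sum_i\lambda_i\,c_1(L_i)$, and where, by Deligne's theory, every global logarithmic form is automatically closed. Hence, whenever $\sum_i\lambda_i\,c_1(L_i)=0$ in $\H^2(M,\mathbb{C})$, there is a global closed logarithmic form $\eta_\lambda$ with residue $\lambda_i$ along $Z_i$, unique modulo $\H^0(M,\Omega^1_M)$, and locally $\eta_\lambda=\sum_i\lambda_i\tfrac{ds_i}{s_i}+(\text{holomorphic})$. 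By the first paragraph $\omega\wedge\eta_\lambda$ is holomorphic, and since $\eta_\lambda$ is ambiguous only by $\H^0(M,\Omega^1_M)$ we obtain a well-defined linear map
$$\bar\Psi:\ \ker\partial\ \longrightarrow\ \H^0(M,\Omega^2_M\otimes_{\mathcal{O}_M}L)\big/\,\omega\wedge\H^0(M,\Omega^1_M),\qquad \lambda\mapsto\omega\wedge\eta_\lambda.$$
Because the classes $c_1(L_i)$ span a subspace of $\H^2(M,\mathbb{C})$ of dimension at most $\rho(M)$, one has $\dim\ker\partial\ge N-\rho(M)$, so $\dim\ker\bar\Psi\ge N-\rho(M)-d$, where $d$ is the dimension of the target quotient.

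Finally I would produce a first integral whenever $\dim\ker\bar\Psi\ge 2$, which is the Jouanolou ratio trick. For $\lambda\in\ker\bar\Psi$ one subtracts a holomorphic $1$-form to get a closed logarithmic form $\tilde\eta_\lambda$ with the same residues $\lambda_i$ but satisfying $\omega\wedge\tilde\eta_\lambda=0$; thus $\tilde\eta_\lambda$ is pointwise proportional to $\omega$ where $\omega\neq0$. Taking two $\mathbb{C}$-independent elements $\lambda,\mu$, the forms $\tilde\eta_\lambda,\tilde\eta_\mu$ are proportional to each other, say $\tilde\eta_\lambda=g\,\tilde\eta_\mu$ with $g$ meromorphic; differentiating $0=d\tilde\eta_\lambda=dg\wedge\tilde\eta_\mu$ shows $dg$ is proportional to $\omega$, so the fibres of $g$ are $\omega$-invariant, and $g$ is non-constant since the residue data $\lambda,\mu$ are independent. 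After Stein factorisation $g$ is a first integral, contradicting the hypothesis. Therefore $\dim\ker\bar\Psi\le 1$, which forces $N-\rho(M)-d\le 1$, i.e. $N\le d+\rho(M)+1$, as claimed.

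The step I expect to be the main obstacle is the residue exact sequence, i.e. producing the global closed logarithmic form $\eta_\lambda$ out of the mere vanishing of $\sum_i\lambda_i\,c_1(L_i)$ in $\H^2(M,\mathbb{C})$; this is the Hodge-theoretic point where compactness and the K\"ahler hypothesis enter, and it is what replaces a naive relation in $\operatorname{Pic}(M)$ (the latter would only yield the weaker bound with $\operatorname{rk}\operatorname{Pic}(M)$ in place of $\rho(M)$). A secondary subtlety is that $\ker\bar\Psi$ carries no rational structure, so a single nonzero element need not have commensurable residues and hence need not integrate to a meromorphic first integral; this is exactly why two independent elements, and the extra ``$+1$'' in the bound, are needed.
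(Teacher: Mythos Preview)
The paper does not prove this theorem at all: it is quoted from the literature, with the sentence ``proven in \cite{J} with some extra assumptions, satisfied for projective manifolds, and then in \cite{G}'' standing in lieu of a proof. So there is nothing in the paper to compare your argument against.

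That said, your sketch is the standard Darboux--Jouanolou argument in the form found in those references, and its outline is correct: invariance of $Z_i$ makes $\omega\wedge\frac{ds_i}{s_i}$ holomorphic, the residue sequence converts a relation among the Chern classes into a closed global logarithmic form, the dimension count gives $\dim\ker\bar\Psi\ge N-\rho(M)-d$, and two independent elements of $\ker\bar\Psi$ yield a meromorphic first integral via the ratio trick. One caveat: you explicitly appeal to the K\"ahler hypothesis (closedness of global logarithmic forms, the Hodge-theoretic residue sequence), so what you have written is really Jouanolou's projective version. Ghys's contribution, which the paper cites, was precisely to remove that hypothesis and prove the bound for arbitrary compact complex manifolds; your sketch does not cover that generality. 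For the paper's application, however, everything takes place on projective varieties, so the discrepancy is harmless.
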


For the case of an arbitrary codimension this theorem was generalized in \cite{CMS}.

\begin{theorem}
    Let $\mathcal{F}$ be a Pfaff system of codimension $r$ on a connected compact complex manifold $M$ defined by the form $\omega \in \H^0(M, \Omega^r_M \otimes_{\mathcal{O}_M} L)$. If $\omega$ does not admit a first integral, then the number of $\omega$-invariant hypersurfaces is not greater than $$\dim\left[H^0\left(M, \Omega^{r + 1}_M \otimes_{\mathcal{O}_M} L\right)\big/\omega\wedge H^0(M, \Omega^1_{\operatorname{cl}}) \right] + \dim H^1(M, \Omega^1_{\operatorname{cl}}) + r + 1,$$ where $\Omega^1_{\operatorname{cl}}$ denotes the sheaf of closed 1-form.
\end{theorem}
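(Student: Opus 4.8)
The plan is to argue by contraposition, following the strategy of Jouanolou and Ghys: assuming that $\mathcal{F}$ has a large number $N$ of $\omega$-invariant hypersurfaces $Z_1,\dots,Z_N$, I would manufacture a closed logarithmic $1$-form that annihilates $\omega$ and has rational, not all vanishing, residues, and then show that such a form integrates to a first integral — contradicting the hypothesis. The whole point of the generalization is that everything is organized around logarithmic $1$-forms, so that the passage from codimension $1$ to codimension $r$ changes the target spaces of the relevant linear maps but not the overall mechanism.

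First I would record the basic local structure of invariance. If $f_i$ is a local equation of $Z_i$, then $\omega|_{Z_i}=0$ is equivalent to the existence of a local $r$-form $A_i$ and a local $(r-1)$-form $B_i$ with $\omega = f_i A_i + df_i\wedge B_i$. From this one reads off two facts. On one hand $\tfrac{\omega}{f_i}=A_i+\tfrac{df_i}{f_i}\wedge B_i$ is a logarithmic $r$-form whose residue along $Z_i$ is the $(r-1)$-form $B_i|_{Z_i}$. On the other hand, and crucially, $\omega\wedge\tfrac{df_i}{f_i}=A_i\wedge df_i$ has no pole: although $\tfrac{df_i}{f_i}$ is only a logarithmic $1$-form, $\omega\wedge\tfrac{df_i}{f_i}$ is a genuine global holomorphic section of $\Omega^{r+1}_M\otimes L$. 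This is the engine of the estimate.

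Then I would set up the counting. For a residue vector $\lambda=(\lambda_i)$ write $\eta_\lambda=\sum_i\lambda_i\,\tfrac{df_i}{f_i}$; by the previous step $\omega\wedge\eta_\lambda\in H^0(M,\Omega^{r+1}_M\otimes L)$. Two obstructions, each linear in $\lambda$, stand between $\eta_\lambda$ and a usable first integral. The obstruction to $\eta_\lambda$ being a genuine global closed logarithmic form is the image of $\lambda$ under the connecting map $\delta$ of the residue sequence $0\to\Omega^1_{\operatorname{cl}}\to\Omega^1_{\operatorname{cl}}(\log D)\to\bigoplus_i\mathbb{C}_{Z_i}\to 0$ (with $D=\sum_iZ_i$), which lands in $H^1(M,\Omega^1_{\operatorname{cl}})$ and sends $\lambda$ to $\sum_i\lambda_i\,c_1(\mathcal{O}(Z_i))$; the obstruction to choosing a lift with $\omega\wedge\eta_\lambda=0$ exactly is the class of $\omega\wedge\eta_\lambda$ in $V:=H^0(M,\Omega^{r+1}_M\otimes L)/\omega\wedge H^0(M,\Omega^1_{\operatorname{cl}})$, the quotient being introduced precisely to absorb the ambiguity of the lift by closed holomorphic $1$-forms. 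Thus $\lambda\mapsto\big([\omega\wedge\eta_\lambda],\,\delta(\lambda)\big)$ is a linear map $\mathbb{C}^N\to V\oplus H^1(M,\Omega^1_{\operatorname{cl}})$, defined over $\mathbb{Q}$ because residues and Chern classes are rational. Once $N$ exceeds the stated bound $\dim V+\dim H^1(M,\Omega^1_{\operatorname{cl}})+(r+1)$, this map has a kernel with room to spare, and since it is a rational subspace it contains a nonzero rational vector $\lambda$; this produces a global closed logarithmic $1$-form $\eta$ with rational residues $\lambda_i$, not all zero, and $\omega\wedge\eta=0$.

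Finally I would integrate. As $\delta(\lambda)=0$, a suitable integer multiple of $\eta$ equals $\tfrac{dg}{g}$ for a global meromorphic function $g=\prod_i f_i^{\,m\lambda_i}$; it is non-constant because $\eta\neq 0$, and $\omega\wedge\eta=0$ forces $\omega\wedge dg=0$, hence $\omega=dg\wedge\beta$ by the division lemma for forms, so every fibre of $g$ is $\omega$-invariant and $g$ (after Stein factorization, a map to a smooth projective curve) is a first integral — the desired contradiction. The main obstacle, and the reason for the additive $r+1$ rather than the $+1$ of the codimension-one case, is the higher-codimension bookkeeping: the residues are now $(r-1)$-forms rather than functions, so verifying that $\omega\wedge\tfrac{df_i}{f_i}$ is genuinely pole-free, identifying the connecting map of the residue sequence with a map into $H^1(M,\Omega^1_{\operatorname{cl}})$ in the stated non-projective generality, and guaranteeing that enough of the kernel survives to force $g$ non-constant, all require care; this is where I expect the real work of \cite{CMS} to lie.
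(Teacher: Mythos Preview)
The paper does not contain a proof of this theorem: it is quoted from \cite{CMS} as a known generalization of the Jouanolou--Ghys theorem, and only its corollary (that infinitely many $\omega$-invariant hypersurfaces force a first integral) is used in the sequel. So there is no ``paper's own proof'' to compare against.

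That said, your sketch is precisely the approach of \cite{CMS}: from local invariance deduce that $\omega\wedge\tfrac{df_i}{f_i}$ is holomorphic, set up the linear map $\lambda\mapsto\big([\omega\wedge\eta_\lambda],\,\delta(\lambda)\big)$ into $H^0(\Omega^{r+1}_M\otimes L)/\omega\wedge H^0(\Omega^1_{\mathrm{cl}})\,\oplus\,H^1(\Omega^1_{\mathrm{cl}})$, use rationality of the map to extract a rational kernel vector once $N$ exceeds the bound, and integrate the resulting closed logarithmic $1$-form with rational residues to a meromorphic first integral. Your own closing paragraph correctly identifies where the technical work lies; you should be aware that the extra $+r$ in the bound (versus Jouanolou's $+1$) is not merely ``bookkeeping'' but comes from an honest step in the argument of \cite{CMS} needed to pass from $\omega\wedge\eta=0$ to a first integral in codimension $r$, so the sketch as written stops just short of explaining that constant.
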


For our purpose, we use only the following corollary.

\begin{corollary}\label{cor: inv_hyp}
If the number of $\omega$-invariant hypersurfaces is infinite, then the Pfaff system defined by $\omega$ admits a first integral.
\end{corollary}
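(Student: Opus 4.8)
The plan is to observe that this corollary is nothing more than the contrapositive of the preceding theorem of Correa, Maza and Soares, once one checks that the bound appearing in that theorem is a finite number. So the entire content is already contained in the cited result, and the work reduces to a finiteness verification.

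First I would suppose, for the contrapositive, that the Pfaff system defined by $\omega$ does \emph{not} admit a first integral. The cited theorem then asserts that the number of $\omega$-invariant hypersurfaces is bounded above by
\[
N := \dim\left[H^0\left(M, \Omega^{r+1}_M \otimes_{\mathcal{O}_M} L\right)\big/\,\omega \wedge H^0(M, \Omega^1_{\operatorname{cl}})\right] + \dim H^1(M, \Omega^1_{\operatorname{cl}}) + r + 1.
\]
Next I would argue that each summand on the right-hand side is finite. Since $M$ is a compact complex manifold and $\Omega^{r+1}_M \otimes_{\mathcal{O}_M} L$ is a coherent sheaf, the Cartan--Serre finiteness theorem gives that $H^0\!\left(M, \Omega^{r+1}_M \otimes_{\mathcal{O}_M} L\right)$ is finite-dimensional; the displayed quotient is a quotient of this space, hence also finite-dimensional. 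Likewise the sheaf $\Omega^1_{\operatorname{cl}}$ of closed holomorphic $1$-forms is coherent, so $H^1(M, \Omega^1_{\operatorname{cl}})$ is finite-dimensional for the same reason. Finally $r$ and $1$ are fixed integers. Therefore $N$ is a finite number.

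Putting these together: under the assumption that $\omega$ has no first integral, the number of $\omega$-invariant hypersurfaces is at most $N < \infty$. Taking the contrapositive, if the number of $\omega$-invariant hypersurfaces is infinite, then $\omega$ must admit a first integral, which is exactly the statement of the corollary. There is essentially no obstacle here: the substantive work lies entirely in the theorem of \cite{CMS}, and the only thing to be careful about is confirming the finiteness of the cohomological quantities in the bound, which follows from compactness of $M$ via coherent sheaf finiteness.
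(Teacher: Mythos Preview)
Your approach is exactly what the paper intends: the corollary is stated immediately after the theorem of Correa--Maza--Soares with no separate proof, because it is just the contrapositive once the bound is seen to be finite. So your overall strategy is correct and matches the paper.

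There is, however, one small slip in your finiteness argument. The sheaf $\Omega^1_{\operatorname{cl}}$ of closed holomorphic $1$-forms is \emph{not} a coherent $\mathcal{O}_M$-module: it is the kernel of the exterior derivative $d\colon \Omega^1_M \to \Omega^2_M$, and $d$ is only $\mathbb{C}$-linear, not $\mathcal{O}_M$-linear. So you cannot invoke Cartan--Serre finiteness directly for $H^1(M,\Omega^1_{\operatorname{cl}})$. The correct way to get finiteness is via the short exact sequence of sheaves
\[
0 \longrightarrow \mathbb{C}_M \longrightarrow \mathcal{O}_M \xrightarrow{\ d\ } \Omega^1_{\operatorname{cl}} \longrightarrow 0
\]
coming from the holomorphic Poincar\'e lemma. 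The associated long exact sequence sandwiches $H^1(M,\Omega^1_{\operatorname{cl}})$ between $H^1(M,\mathcal{O}_M)$ (finite-dimensional by Cartan--Serre) and $H^2(M,\mathbb{C})$ (finite-dimensional since $M$ is compact), so $H^1(M,\Omega^1_{\operatorname{cl}})$ is indeed finite-dimensional. With this correction your argument goes through.
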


Now we are ready to prove Proposition \ref{prop: 4-dim}.

\medskip

\begin{proof}
    Resolving singularities if necessary we may assume that $Y$ is smooth and maps to $X$ birationally onto its image. Observe that the pull-back of the holomorphic symplectic form to $Y$ defines a Pfaff system of codimension 2. Since there are no non-trivial holomorphic forms on a rationally connected variety, the hypersurface $\Pi_i \subset Y$ is invariant for any $i$. By corollary \ref{cor: inv_hyp}, this Pfaff system admits first integral $f: Y \dashrightarrow C$.

    Blowing-up the variety $Y$ again if necessary, we get the following commutative diagram 
    \begin{center}
        \begin{tikzcd}
            \widehat{Y} \arrow[d, "\epsilon"] \arrow[rd, "\widehat{f}"] & \\
        Y \arrow[r, "f", dashed] & C
        \end{tikzcd}        
    \end{center}
    The morphism $\widehat{f}$ is flat since $C$ is a curve. By \cite{DF}, over the complex numbers the set of points $x \in X$ with rationally connected fiber $\widehat{f}^{-1}(x)$ is constructible. Since this set is infinite, it is Zariski-open. We get that there is an open subset $U \subset C$ with rationally connected fibers. Since there are no non-trivial holomorphic 2-forms on a rationally connected variety, the form determining the Pfaff system is a lift of some form on $C$. There are no such forms on $C$, because $\dim C = 1$. \qed
\end{proof}

\section{Potential density}

In this section, we finish the proof of our main result about potential density of rational points on $X$. Define by induction the  family of subvarieties $\{\Pi_k^{i}\} \, (i \in \{1, 2\}, k \in \mathbb{Z}_{\geq 0})$ $$\Pi^1_0 = \tilde{\Pi}_2, \, \Pi^2_n =\overline{\iota_1(\Pi^1_n \setminus \operatorname{Ind} \iota_1)}, \, \Pi^1_{n + 1} = \overline{\iota_2\left(\Pi^2_{n} \setminus \operatorname{Ind} \iota_2\right)}.$$  

Every embedding $\Pi^i_k \hookrightarrow X\,$ $(i \in \{1, 2\}, k \in \mathbb{Z}_{\geq 0})$ induce a map of Mori cones. Our goal is to describe the image of this map. For this purpose, it is convenient to work in bases $$L_i = H_{i + 1} - \frac{3}{2}E,\, K_i = 2H_{i + 1} - H_{i} - \frac{1}{2}E,\, E$$ 
(here, we set $i + 1 = 1$ for $i = 2$). The action of the morphism $(\iota_i)_*$ in these bases is as follows
\begin{gather*} (\iota_i)_* \left(L_i \right) = (\iota_i)_*\left(H_{i+1} - \frac{3}{2}E\right) = 2H_{i} - H_{i+1} - \frac{1}{2}E = K_{i + 1},\\
(\iota_i)_*\left(K_i\right) = (\iota_i)_*\left(2H_{i + 1} - H_{i} - \frac{1}{2}E\right) = 9H_{i} - 2H_{i+1} - \frac{15}{2}E = 5L_{i + 1} + 2K_{i + 1} + E, \\
(\iota_i)_*\left(E\right) = 4H_{i} - 5E = 4L_{i + 1} + E.
\end{gather*} 
Define the subcones 
$$J_1 = \big\langle L_1, K_1, E \big\rangle_{\mathbb{R}_{\geq 0}}, \, J_2 = \big\langle L_2, K_2, E \big\rangle_{\mathbb{R}_{\geq 0}} \subset NE_1(X).$$
It is clear that $(\iota_i)_*(J_i) \subset J_{i + 1}$.
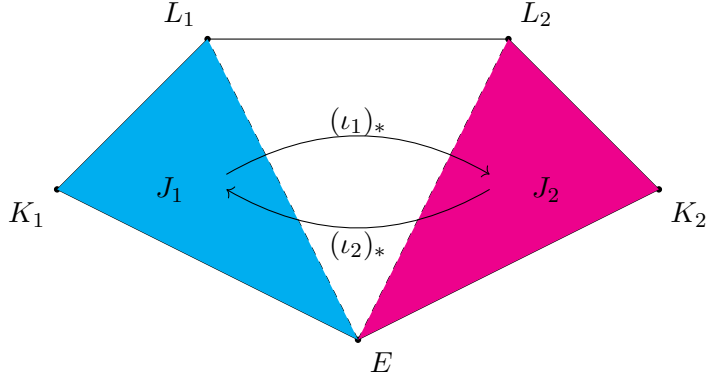
\begin{figure}[h]
\centering
\begin{tikzpicture}[main/.style = {fill, draw, circle, minimum width=2pt, inner sep=0pt}]
    \node[main] (x) at (0, 0) [label={-60: $E$}] {};
\node[main] (x1) at (4, 2) [label = {-60:$K_2$}] {};
\node[main] (x2) at (2, 4) [label = {60:$L_2$}] {};
\node[main] (y1) at (-4, 2) [label = {-120:$K_1$}] {};
\node[main] (y2) at (-2, 4) [label = {120: $L_1$}] {};

\draw[-] (0, 0) to (4, 2);
\draw[-] (4, 2) to (2, 4);
\draw[-] (2, 4) to (-2, 4);
\draw[-] (-2, 4) to (-4, 2);
\draw[-] (-4, 2) to (0, 0);
\draw[dashed] (0, 0) to (-2, 4);
\draw[dashed] (0, 0) to (2, 4);

\fill[cyan] (0, 0) -- (-2, 4) -- (-4, 2) -- cycle;
\fill[magenta] (0, 0) -- (2, 4) -- (4, 2) -- cycle;

\draw[->, bend left] (-1.75, 2.2) to (1.75, 2.2);
\draw[->, bend left] (1.75, 2) to (-1.75, 2);

\draw (0, 2.9) node{$(\iota_1)_*$};
\draw (0, 1.25) node{$(\iota_2)_*$};

\draw (-2.5, 2) node{$J_1$};
\draw (2.5, 2) node{$J_2$};

\end{tikzpicture}
\caption{Subcones $J_1$ and $J_2$}
    \label{fig: cone2}
\end{figure}

\begin{proposition} \label{prop: ind}
    1) For any integer $k \geq 1$, the indeterminacy set of the map $\iota_2: \Pi^2_{k - 1} \dashrightarrow \Pi^1_k$ is finite. For any integer $k \geq 0$ the image of the map $\NE_1(\Pi^1_k) \to \NE_1(X)$ is contained in the cone $J_1$.

    2) For any integer $k \geq 0$, the indeterminacy set of the map $\iota_2: \Pi^1_{k} \dashrightarrow \Pi^2_{k}$ is finite. For any integer $k \geq 0$ the image of the map $\NE_1(\Pi^2_k) \to \NE_1(X)$ is contained in the cone $J_2$.
\end{proposition}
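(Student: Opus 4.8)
The plan is to prove all four assertions --- the two finiteness statements and the two cone inclusions --- by a single simultaneous induction on $k$, and the first thing I would record is that \emph{finiteness is a formal consequence of cone containment}. Suppose the image of $\NE_1(\Pi^1_k)$ lies in $J_1$. If $\Pi^1_k \cap \tilde{\Pi}_1$ were not finite it would contain a curve $\Gamma \subset \tilde{\Pi}_1 \simeq \P^3$, whose class in $\N_1(X)$ is a positive multiple of the line class $L_2 = H_1 - \frac{3}{2}E$ of $\tilde{\Pi}_1$; but a direct check shows $L_2 \notin J_1$ (writing $L_2 = aL_1 + bK_1 + cE$ forces $b = -1$), contradicting $[\Gamma] \in J_1$. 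Symmetrically, $\NE_1(\Pi^2_k) \subset J_2$ forces $\Pi^2_k \cap \tilde{\Pi}_2$ finite, since $L_1 \notin J_2$. Thus the indeterminacy of $\iota_1$ on $\Pi^1_k$ (the involution used to form $\Pi^2_k$), and of $\iota_2$ on $\Pi^2_k$, is automatically finite once the corresponding cone inclusion is known, so the only real content is the two cone inclusions. The base case is immediate: $\Pi^1_0 = \tilde{\Pi}_2 \simeq \P^3$ has $\NE_1$ generated by its line class $L_1$, so its image is $\R_{\geq 0}L_1 \subset J_1$, and $\tilde{\Pi}_2 \cap \tilde{\Pi}_1 = \emptyset$ by Proposition~\ref{prop: inter}.

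For the inductive step I would show that $\NE_1(\Pi^1_k) \subset J_1$ implies $\NE_1(\Pi^2_k) \subset J_2$, the other half being identical after swapping the indices $1,2$. Since $\Pi^2_k \subset X$, every curve class of $\Pi^2_k$ already lies in $\overline{\NE_1(X)}$, and therefore automatically satisfies the two facet inequalities of $J_2$ that are genuine facets of $\overline{\NE_1(X)}$ --- namely those cut out by the nef (indeed ample-cone) classes $5H_1 - 2H_2$ (the facet $\langle K_2, E\rangle$) and $11H_1 - 3H_2 - 7E$ (the facet $\langle L_2, K_2\rangle$), see Proposition~\ref{prop: amp}. The whole problem therefore collapses onto the single remaining, ``inner'' facet $\langle E, L_2\rangle$ of $J_2$: one must prove
\[
q\big([C'],\, 4H_1 - 3H_2\big) \leq 0 \qquad \text{for every curve } C' \subset \Pi^2_k ,
\]
where $4H_1 - 3H_2$ is the (non-nef) class orthogonal to both $E$ and $L_2$, with $J_2$ lying on its $\leq 0$ side.

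To control this inequality I would resolve $\iota_1$ on the blow-up $\epsilon_1 \colon \widehat{X}_1 \to X$ of $\tilde{\Pi}_1$, on which $\iota_1$ becomes a biregular involution $\widehat{\iota}_1$ preserving the exceptional divisor $E_1$. Let $\widehat{\Pi}$ be the strict transform of $\Pi^1_k$; as $\Pi^1_k \cap \tilde{\Pi}_1$ is finite, $\widehat{\Pi} \to \Pi^1_k$ is a blow-up at finitely many points and $\widehat{\iota}_1(\widehat{\Pi})$ dominates $\Pi^2_k$. A curve $C' \subset \Pi^2_k$ is then either contained in $\tilde{\Pi}_1$, in which case $[C'] \in \R_{\geq 0}L_2$ and the inequality holds with equality (since $q(L_2, 4H_1 - 3H_2) = 0$), or it is the image $\iota_1(C)$ of a curve $C \subset \Pi^1_k$ with $[C] \in J_1$. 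In the second case the discrepancy $[C'] - (\iota_1)_*[C]$ lies in $\epsilon_{1*}(\widehat{\iota}_1)_*(\ker \epsilon_{1*})$; because $\ker \epsilon_{1*}$ is spanned by the class of a line in a fibre of $E_1$ and $\widehat{\iota}_1$ maps $E_1$ to itself, this correction is pushed by $\epsilon_1$ into a curve class supported on $\tilde{\Pi}_1$, hence is a multiple of $L_2$. Since $q(L_2, 4H_1 - 3H_2) = 0$, the correction is annihilated by the inner functional, and the inequality for $[C']$ reduces to the same inequality for $(\iota_1)_*[C]$. The latter holds because $(\iota_1)_*$ sends $L_1, K_1, E$ to $K_2,\ 5L_2 + 2K_2 + E,\ 4L_2 + E$, which pair with $4H_1 - 3H_2$ as $-14,\ -28,\ 0$, all $\leq 0$, so $(\iota_1)_*(J_1)$ lies on the $\leq 0$ side. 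This gives $\NE_1(\Pi^2_k) \subset J_2$, whence $\Pi^2_k \cap \tilde{\Pi}_2$ is finite by the formal implication above; feeding this into the symmetric step completes the induction.

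The step I expect to be the main obstacle is exactly the identification of the correction term in the last paragraph: showing that the gap between the honest image class $[C']$ and the lattice push-forward $(\iota_1)_*[C]$ --- which appears precisely because $C$ may run through the (finite) indeterminacy locus --- is a multiple of the line class $L_2$ of $\tilde{\Pi}_1$. This is where the geometry of the Beauville involution genuinely enters, through the structure of the exceptional incidence divisor $E_1 \simeq \{(x,H) : x \in H\}$ and the fact that $\widehat{\iota}_1$ preserves it; everything else is either a formal cone manipulation or a finite numerical check. The decisive feature is that $L_2$ lies \emph{on} the inner wall $\langle E, L_2\rangle$, which is what renders the correction harmless --- had it pointed into the interior of $J_2^{\,c}$, the argument would fail.
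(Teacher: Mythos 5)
Your proposal is correct and takes essentially the same route as the paper's proof: the same simultaneous induction with finiteness deduced from $L_1 \notin J_2$ (resp.\ $L_2 \notin J_1$), the same splitting of effective curves in the image into those lying in the indeterminacy $\P^3$ plus strict-transform images whose class differs from the lattice push-forward by a multiple of the line class of that $\P^3$, and the same use of boundary classes of $\overline{\Amp(X)}$ from Proposition \ref{prop: amp}. Your only deviation is dual bookkeeping: you check the single non-nef supporting functional $4H_1 - 3H_2$ of the inner facet (which makes the sign of the correction term irrelevant, since $q(L_2, 4H_1-3H_2)=0$), whereas the paper expands in the basis $\{L_1, K_1, E\}$ and pins the remaining coefficient with the nef class $5H_2 - 2H_1$ --- the same linear algebra in transposed form.
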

\begin{proof}
	We prove the proposition by induction on $k$. We start with both parts of the proposition for $k = 0$ (this is the induction base for all statements but the very first, but the first statement for $k=1$ shall follow from part 2 for $k=0$). For the first part, it is enough to observe that $\NE_1(\Pi^1_0)$ is generated by $L_1$, and thus, is contained in $J_1$. For the second part, observe that $\iota_2$ is regular on $\Pi^1_0$ by proposition \ref{prop: inter}. Thus, $$\NE_1(\Pi^2_k) = (\iota_1)_*\left(\NE_1(\Pi^1_k)\right) \subset (\iota_1)_*(J_1) \subset J_2.$$

    Let us prove the induction step. Assume that the second part of the assertion is proved for $k$. 
	We show that the first part holds for $k + 1$. If the intersection $\Pi^2_k \bigcap \tilde{\Pi}_2$ is of dimension at least 1, then it contains an effective curve whose class is a positive multiple of $L_1$. However, $\NE_1(\Pi^2_k) \subset J_2$ which contradicts $L_1 \notin J_2$.

    It is clear that the set $\Pi^1_{k + 1} \setminus \iota_2(\Pi^2_{k} \setminus \operatorname{Ind}\iota_2)$ is contained in $\tilde{\Pi}_2$. Thus, any class of an effective curve in $\Pi^1_{k + 1}$ equals either a positive multiple of $L_1$ or $(\iota_2)_* C - \lambda L_1$, where $C$ is a class of some effective curve in $\Pi^2_k$ and $\lambda \geq 0$. We know that $C \in J_2$, so $(\iota_2)_* C 
\in J_1$. By proposition \ref{prop: amp}, the class of an effective curve has non-negative intersection with $5H_2 - 2H_1$. Suppose that $$(\iota_2)_* C - \lambda L_1 = \alpha L_1 + \beta K_1 + \gamma E.$$ It is evident that $\beta, \gamma \geq 0$. Using that $q(5H_2 - 2H_1, K_1) = q(5H_2 - 2H_1, E) = 0$ we get $(5H_2 - 2H_1, \alpha L_2) = 14 \alpha \geq 0$. Therefore, the class of any effective curve in $\Pi^1_k$ belongs to $J_1$.

The induction step from 
	1) for $k+1$ to 2) for $k+1$ is analogous. \qed
\end{proof}

\begin{proposition} \label{prop: periodic}
    The family of subvarieties $\{\Pi^1_k\}$ is not periodic. In other words, $$\Pi^1_n \neq \Pi^1_m$$ for $n \neq m$.
\end{proposition}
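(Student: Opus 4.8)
The plan is to turn periodicity into an equality of Mori cones and then to refute it using the expanding eigenvalue of $f_*$. Suppose $\Pi^1_n = \Pi^1_m$ with $n < m$, and write $\sigma_k \subseteq \N_1(X)$ for the image of $\NE_1(\Pi^1_k) \to \N_1(X)$; an equality of subvarieties forces $\sigma_n = \sigma_m$. By Proposition \ref{prop: ind} every indeterminacy met in passing from $\Pi^1_k$ to $\Pi^1_{k+1}$ is finite, so these cones obey a recursion driven by $f_* = (\iota_2)_*(\iota_1)_*$ and the whole argument stays inside the rational polyhedral cone $J_1$. In the basis $\{L_1, K_1, E\}$ one has
\[
f_* = \begin{pmatrix} 5 & 14 & 4 \\ 2 & 9 & 4 \\ 1 & 3 & 1 \end{pmatrix},
\qquad \det(\lambda I - f_*) = (\lambda - 1)(\lambda^2 - 14\lambda + 1),
\]
so the eigenvalues are $1$ and $7 \pm 4\sqrt 3$. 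Two features are used throughout: each of $L_1, K_1, E$ is sent into $J_1$, hence $f_*(J_1) \subseteq J_1$; and the dominant eigenvalue $\mu = 7 + 4\sqrt 3 > 1$ has an eigenvector $d^{+}$ with irrational coordinates lying in the interior of $J_1$. Since $f_* \in \mathrm{GL}_3(\Z)$ and $L_1$ is not an eigenvector, the classes $a_j := f_*^{\,j} L_1$ are primitive, pairwise non-proportional, and their rays converge to $[d^{+}]$.

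Next I would set up the cone recursion in both directions. A general line of the ruling of $\tilde\Pi_2$ (class $L_1 = a_0$) avoids the finite indeterminacy, and this persists after each application of $\iota_1$ and $\iota_2$; hence the moving curves propagate without correction, giving the lower bound $a_j \in \sigma_k$ for all $0 \le j \le k$. In the reverse direction, Proposition \ref{prop: ind} shows that every effective class of $\Pi^1_{k+1}$ is either a multiple of $a_0$ or of the shape $f_*(w) - \lambda a_0$ with $w \in \sigma_k$ and $\lambda \ge 0$, the term $-\lambda a_0$ accounting for the curves that do pass through the (finite) indeterminacy. Unrolling this, one checks inductively that the extremal rays of $\sigma_k$ all have the form $a_j - (\text{nonnegative combination of } a_0, \dots, a_{j-1})$, so that $\sigma_k$ is squeezed between $\langle a_0, \dots, a_k\rangle_{\R_{\ge 0}}$ and the correction-enlarged image of $f_*(\sigma_{k-1})$, all inside $J_1$.

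The contradiction is then to be extracted from the eigenfunctionals. Let $\varphi^{0}, \varphi^{+}$ be the left eigenvectors of $f_*$ for the eigenvalues $1$ and $\mu$, normalised so that $\varphi^{0}(a_j) = \varphi^{0}(a_0)$ and $\varphi^{+}(a_j) = \mu^{j}\varphi^{+}(a_0)$ for all $j$. The goal is to prove $a_m \notin \sigma_n$ whenever $m > n$, which already yields $a_m \in \sigma_m \setminus \sigma_n$ and hence $\Pi^1_m \neq \Pi^1_n$. In the correction-free model, where $\sigma_n = \langle a_0, \dots, a_n\rangle_{\R_{\ge 0}}$, this is immediate: writing $a_m = \sum_{j=0}^{n} t_j a_j$ with $t_j \ge 0$ and evaluating the two functionals gives $\sum_j t_j = 1$ (from $\varphi^{0}$) and $\sum_j t_j \mu^{j} = \mu^{m}$ (from $\varphi^{+}$), whence $\mu^{m} = \sum_j t_j \mu^{j} \le \mu^{n}\sum_j t_j = \mu^{n}$, absurd since $m > n$. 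Thus the expanding factor $\mu^{m-n}$ is what the depth-$n$ cone cannot accommodate.

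The step I expect to be the main obstacle is precisely the control of the corrections $-\lambda a_0$. Because $f$ is only birational, $f_*$ is not an isometry of the Beauville--Bogomolov form and the recursion is not $f_*$-equivariant: the corrections genuinely enlarge $\sigma_k$ beyond $\langle a_0, \dots, a_k\rangle_{\R_{\ge 0}}$ (a line through an indeterminacy point already produces a class $a_1 - \lambda a_0$), and left unchecked they break the clean $\varphi^{0}/\varphi^{+}$ dichotomy above by destroying the relation $\sum_j t_j = 1$. Showing that they cannot fully compensate the $\mu^{m-n}$ expansion is where the $J_1$-constraint must be exploited: one bounds the correction coefficients using the explicit ample and Mori cones of Section \ref{mori} (in particular Proposition \ref{prop: amp}) together with the finiteness of the indeterminacy loci (Proposition \ref{prop: ind}). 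This is naturally organised as a single induction on $k$ that simultaneously tracks the shape of $\sigma_k$ and its advance toward the irrational interior ray $[d^{+}]$, and it is the technical heart of the argument.
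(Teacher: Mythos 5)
Your proposal is not a complete proof: the decisive step --- showing that the correction terms $-\lambda a_0$ coming from curves through the indeterminacy loci cannot compensate the expansion factor $\mu^{m-n}$ --- is explicitly deferred (you call it ``the technical heart'' and ``the main obstacle'') and never carried out. The eigenfunctional computation with $\varphi^{0},\varphi^{+}$ is done only in the correction-free model $\sigma_n=\langle a_0,\dots,a_n\rangle_{\R_{\geq 0}}$, and as you yourself note, a single class of the form $a_1-\lambda a_0$ already destroys the relation $\sum_j t_j=1$ on which that computation rests; so nothing is proved in the actual situation. The auxiliary claims are also shaky: the ``squeezed'' description of the extremal rays of $\sigma_k$ is asserted rather than proved (corrections arise at every intermediate $\iota_1$- and $\iota_2$-step, not only against $a_0$), and $a_j\in\sigma_k$ for $j<k$ is unjustified --- only $a_k\in\sigma_k$ follows by pushing a general line of $\tilde{\Pi}_2$ through the finitely many indeterminacy points. (A side error: $f^*$ \emph{is} a $q$-isometry, since birational self-maps of a hyperk\"ahler manifold contract no divisors and the matrices $M_1,M_2$ preserve $q$; the recursion for the cones fails to be $f_*$-equivariant for the strict-transform reason, not for the reason you give.)

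The paper's own proof sidesteps all of this with one observation you missed: reduce to $n=0$ (applying $f^{-1}$ pulls $\Pi^1_k$ back to $\Pi^1_{k-1}$), and use that $\Pi^1_0=\tilde{\Pi}_2\simeq\P^3$ has $\N_1$ of rank one. If $\Pi^1_m=\Pi^1_0$, take a curve $C\subset\Pi^1_0$ avoiding the indeterminacy of $f^m$, which is finite by Proposition \ref{prop: ind}; its image is again a curve in $\Pi^1_0\simeq\P^3$, hence its class is a positive rational multiple of $[C]$ --- the corrections you struggle with are invisible precisely because every effective curve in $\P^3$ has class a positive multiple of $L_1$. Thus $L_1$ would be an eigenvector of $(f^m)_*=(f_*)^m$ with positive rational eigenvalue, contradicting Proposition \ref{prop: action}: the eigenvalues are $1$ and $(7\pm 4\sqrt{3})^{m}$, the latter two irrational, and the eigenvector for $1$ is not proportional to $L_1$. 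So while your skeleton ($a_m\in\sigma_m\setminus\sigma_n$ implies $\Pi^1_m\neq\Pi^1_n$) is sound in outline, the route you chose leads into exactly the quantitative cone-control problem the paper's one-paragraph argument is designed to avoid.
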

\begin{proof}
    It is enough to prove the proposition for $n = 0$. Assume the opposite, then $\Pi^1_m = \Pi^1_0$ for some positive $m$. Take a curve $C$ not passing through the indeterminacy set of the map $f^m$ (this set is finite by proposition \ref{prop: ind}). The class of the image curve is a positive multiple of $[C]$ (since both of them lie in $\Pi^1_0 \simeq \mathbb{P}^3$). Then $[C]$ is an eigenvector of the operator $(f^m)_*$ which contradicts the proposition \ref{prop: action}. \qed
\end{proof}

Finally, we are ready to prove the main theorem.

\begin{theorem}
    Rational points of $X$ are PD.
\end{theorem}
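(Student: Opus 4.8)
The plan is to produce a single number field $L$ such that $X(L)$ is Zariski-dense, by spreading the rational points of one rational $\P^3$ around the orbit and then showing the orbit is dense. First I would fix a number field $L$ over which $S$, its two embeddings as an intersection of a quadric and a cubic in $\P^4$, the involutions $\iota_1,\iota_2$, and hence $f$ are all defined; enlarging $L$ if necessary I may also assume that $\tilde{\Pi}_2$, the Fano variety of lines of the quadric $Q_2$, is isomorphic to $\P^3_L$, so that $\Pi^1_0=\tilde{\Pi}_2$ already has Zariski-dense $L$-points. By Proposition \ref{prop: ind} the indeterminacy of each step $\iota_i\colon \Pi^j_k\dashrightarrow \Pi^{j'}_k$ is finite, so the composite $f^k\colon \Pi^1_0\dashrightarrow \Pi^1_k$ is a dominant birational map with finite indeterminacy, defined over $L$. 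Since a dominant map carries a Zariski-dense subset of its domain of definition to a Zariski-dense subset of its image, and since $f^k$ sends $L$-points to $L$-points, the set $\Pi^1_k(L)$ is Zariski-dense in $\Pi^1_k$ for every $k$. Hence $\bigcup_k \Pi^1_k(L)\subseteq X(L)$ is Zariski-dense in $Z:=\overline{\bigcup_k \Pi^1_k}$, and it suffices to prove that $Z=X$.

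Each $\Pi^1_k$ is rational (birational to $\P^3$) of dimension $3$, and by Proposition \ref{prop: periodic} the $\Pi^1_k$ are pairwise distinct. Because $Z$ has finitely many irreducible components, some component $Y$ contains infinitely many of the $\Pi^1_k$; as $Y$ then contains two distinct irreducible threefolds, $\dim Y\geq 4$, so $\dim Z\geq 4$ and in particular $Z$ is not three-dimensional. To exclude $\dim Z=4$, note that in that case $Y$ is a fourfold containing infinitely many of the $\Pi^1_k$. These are rationally connected, and they are lagrangian: each is rational of dimension $3=\frac12\dim X$ and carries no non-zero holomorphic two-form, so the pullback of the symplectic form to a resolution vanishes and $\Pi^1_k$ is isotropic of half-dimension. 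This contradicts the finiteness asserted by Proposition \ref{prop: 4-dim}.

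The remaining case $\dim Z=5$ is the crux, since here one must convert the set-theoretic behaviour of the orbit into a statement about a divisor class. As $f(\Pi^1_k)=\Pi^1_{k+1}$, we have $\overline{f\big(\bigcup_k \Pi^1_k\big)}\subseteq Z$, so the strict transform of $Z$ under $f$ is contained in $Z$. Since birational automorphisms of a variety with trivial canonical bundle contract no hypersurface (as in Proposition \ref{prop: action}), both $f$ and $f^{-1}$ are biregular in codimension one, so the strict transform acts as a bijection on prime divisors; hence $f$ carries each five-dimensional irreducible component of $Z$ onto another one and therefore permutes the finite set of these components. Consequently their sum is an effective divisor $D$ with $f_*D=D$, i.e. with $f$-invariant class, contradicting Proposition \ref{prop: eff_inv}. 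Therefore $\dim Z=6$, that is $Z=X$, and by the first paragraph $X(L)$ is Zariski-dense, which is exactly potential density. I expect this five-dimensional exclusion to be the main obstacle, because it is the only place where the merely birational nature of $f$ — regular only in codimension one — must be handled carefully in passing from the behaviour of orbits to the invariance of a divisor class.
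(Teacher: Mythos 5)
Your proof is correct and follows essentially the same route as the paper: spread $L$-points along the orbit $\{\Pi^1_k\}$, set $Z=\overline{\bigcup_k \Pi^1_k}$, and exclude $\dim Z=3$ via Proposition \ref{prop: periodic}, $\dim Z=4$ via Proposition \ref{prop: 4-dim}, and $\dim Z=5$ via Proposition \ref{prop: eff_inv}. You merely supply details the paper leaves implicit (the pigeonhole argument on components, the verification that the $\Pi^1_k$ are lagrangian, and the fact that $f$ permutes the five-dimensional components since it contracts no divisors), all of which are sound.
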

\begin{proof}
     By construction, there exists a finite field extension such that the rational points of all subvarieties $\Pi^1_k$ are dense in these subvarieties. So the Zariski-closure of rational points of $X$ over this extension contains the closure of the union $\bigcup \Pi^1_k$. Define $$Y = \overline{\bigcup_{n \in \mathbb{Z}_{\geq 0}} \Pi^1_n}.$$ By Proposition \ref{prop: periodic}, $\dim Y > 3$. By Proposition \ref{prop: 4-dim}, $\dim Y \neq 4$. If $\dim Y = 5$, then $Y$ corresponds to an effective $f$-invariant divisor which contradicts Proposition \ref{prop: eff_inv}. Therefore, $\dim Y = 6$, and $Y = X$. \qed
\end{proof}

\medskip

{\bf Acknowledgements:} The authors are grateful to Constantin Shramov and Jorge Vit\'orio Pereira for helpful discussions.
This study has been funded within the framework of the HSE University Basic Research Program.

\printbibliography

\noindent {\sc Ekaterina Amerik \\
{\sc Laboratory of Algebraic Geometry,\\
National Research University HSE,\\
Department of Mathematics, 6 Usacheva Str. Moscow, Russia,}\\
also:\\
{\sc Universit\'e Paris-Saclay,\\
Laboratoire de Math\'ematiques d'Orsay,\\
Campus d'Orsay, B\^atiment 307, 91405 Orsay, France},\\
\tt  ekaterina.amerik@gmail.com},

\medskip

\noindent {\sc Mikhail Lozhkin \\
{\sc Laboratory of Algebraic Geometry,\\
National Research University HSE,\\
Department of Mathematics, 6 Usacheva Str. Moscow, Russia,}\\
\tt lozhkin.mixail@gmail.com}
\end{document}